\def\H{{\mathcal{H}}}
\def\cC{{\mathcal{C}}}
\def\cG{{\mathcal{G}}}
\def\cP{\mathcal{P}}
\def\cA{\mathcal{A}}
\def\cS{\mathcal{S}}
\def\RR{\mathbb{R}}
\def\ZZ{\mathbb{Z}}
\def\NN{\mathbb{N}}
\def\QQ{\mathbb{Q}}
\def\wk{{\widetilde{\kappa}}}
\begin{document}

\theoremstyle{definition}
\newtheorem{definition}{Definition}[section]
\newtheorem{notation}[definition]{Notation}
\newtheorem{example}[definition]{Example}
\newtheorem{remark}[definition]{Remark}
\newtheorem{claim}[definition]{Claim}
\theoremstyle{plain}
\newtheorem{conjecture}[definition]{Conjecture}
\newtheorem{theorem}[definition]{Theorem}
\newtheorem{proposition}[definition]{Proposition}
\newtheorem{lemma}[definition]{Lemma}
\newtheorem{corollary}[definition]{Corollary}

\title{The Large genus asymptotic expansion of Masur-Veech volumes}
\author{Adrien Sauvaget}
\address{
Mathematical Institute, Utrecht University, Budapestlaan 6 / Hans Freudenthal Bldg, 3584 CD Utrecht, The Netherlands
}
\email{a.c.b.sauvaget@uu.nl}

\date{\today}

\maketitle

\begin{abstract}
We study the asymptotic behavior of Masur-Veech volumes as the genus goes to infinity. We show the existence of a complete asymptotic expansion of these volumes that depends only on the genus and the number of singularities.  The computation of the first term of this asymptotics expansion was a long standing problem. This problem was recently solved in~\cite{Agg} using purely combinatorial arguments, and then in~\cite{CheMoeSau} using algebro-geometric insights. Our proof relies on a 
combination of both methods. 
\end{abstract}

\section{Introduction}

\subsection{Masur-Veech volumes} Let $\mu=(k_1,\ldots, k_n)$ be a vector of positive integers. In the text we denote by  $|\mu|$ and $m(\mu)$ the sum and products of the entries of $\mu$ and by $n(\mu)=n$ its length.  The {\em space  of holomorphic differentials of type $\mu$}   is the moduli space $\H(\mu)$ of objects $(C,x_1,\ldots,x_n, \omega)$, where:
\begin{itemize}
\item  $C$ is a connected Riemann surface with $n$ distinct marked points $x_i$;
\item  $\omega$ is a holomorphic 1-form on $C$ such that ${\rm ord}_{x_i}\omega= k_i-1$ for all $1\leq i\leq n$.
\end{itemize}
This moduli space has an integral affine structure (defined in period coordinates) and thus  a natural measure $\nu$. The {\em Masur-Veech volume} is defined as
$$
{\rm Vol}(\mu)= 2 |\mu| \cdot \nu \left(\H_{\leq 1}(\mu)\right)
$$
where $\H_{\leq 1}(\mu)\subset \H(\mu)$ is the subspace of points $(C,x_1,\ldots,x_n,\alpha)$ such that 
$$
\frac{i}{2} \int_{C} \omega\wedge \overline{\omega} \leq 1.
$$
The real number ${\rm Vol}(\mu)$ is finite (see~\cite{Mas} and~\cite{Vee}). For the purpose of the paper we define the {\em reduced volumes} to be 
$$
v(\mu)=m(\mu) \cdot {\rm Vol}(\mu).
$$
The {\em genus} of $\mu$ is defined as  $g(\mu)=(|\mu|-n(\mu)+2)/2$. We say that $\mu$ is an {\em admissible profile} if $g(\mu)$ is an integer.
The genus of a Riemann surface in $\H(\mu)$ is $g(\mu)$, thus $v(\mu)\neq 0$ if and only if $\mu$ is an admissible profile.

\subsection{Large genus analysis}
The purpose of the paper is to study the behaviour of the reduced volumes as the genus goes to infinity. We will use the following convention.
\begin{itemize}
\item We denote by ${\rm Adm}$ the set of admissible profiles. For all $r\in \ZZ_{\geq 0}$, we denote by ${\rm Adm}(r)\subset {\rm Adm}$ the set of profiles with entries greater 
or equal 
than $r$.
\item Let $f:\ZZ_{>0}\to \RR$ be a function and $r \in \ZZ_{>0}$. The notation  $O_r(f(g))$ stands for: a function $\epsilon:{\rm Adm}(r)\to \RR$, such that there exists a constant $C>0$ satisfying for all $\mu\in {\rm Adm}(r)$, $\epsilon(\mu)< C\cdot f(g(\mu))$.
\item For all $(x,\ell)\in \NN^2$, we denote  $(x)_\ell=x(x-1)\ldots (x-\ell+1)$ (the {\em decreasing Pochhammer symbol}).
\end{itemize}

\begin{theorem}\label{th:main}
There exists a family of  coefficients $(c_{k,\ell})_{(k,\ell)\in \NN^2}$ in $\QQ[\pi^2]$ such that for all $r\in \NN$, we have
\begin{equation}\label{for:main}
v(\mu) = \sum_{k+\ell\leq r} \frac{c_{k,\ell}}{g(\mu)^k (|\mu|-1)_\ell} + O_r(g^{-r}).
\end{equation}
Moreover, these coefficients are uniquely determined and can be explicitly computed.
\end{theorem}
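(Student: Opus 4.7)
\medskip\noindent\emph{Proof strategy.} The plan is to bootstrap the known leading-term results of \cite{Agg} and \cite{CheMoeSau} into the full asymptotic expansion \eqref{for:main}, by combining the algebro-geometric identity expressing $v(\mu)$ as an intersection number on $\oM_{g,n}$ with the combinatorial asymptotic analysis developed in \cite{Agg}.

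First I would rewrite $v(\mu)$ via the Chen--M\"oller--Sauvaget formula, in which the $\mu$-dependence enters through explicit monomials in the parts $k_i$ whose coefficients are intersection numbers on $\oM_{g,n}$ involving $\psi$- and $\kappa$-classes. Since $v(\mu)$ is symmetric in the $k_i$, the relevant sum can be reorganized so that the dependence on $\mu$ reduces to $g=g(\mu)$, $n=n(\mu)$, and the decreasing Pochhammer symbols $(|\mu|-1)_\ell$. I would then apply Virasoro-type recurrences (equivalently, the string and dilaton equations) to expand each coefficient asymptotically in $1/g$, with coefficients in $\QQ[\pi^2]$. Collecting terms of bidegree $(k,\ell)$ in the pair $\bigl(1/g,\, 1/(|\mu|-1)_\ell\bigr)$ produces the claimed $c_{k,\ell}$; uniqueness follows from the two-parameter nature of the expansion, and explicit computability is visible in the recurrences themselves. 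The fact that all relevant intersection numbers of $\psi$- and $\kappa$-classes are rational multiples of powers of $\pi^2$ ensures $c_{k,\ell}\in\QQ[\pi^2]$.

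The main obstacle will be establishing the error estimate $O_r(g^{-r})$ \emph{uniformly} over admissible profiles in ${\rm Adm}(r)$. The difficulty is that $n(\mu)$ is not bounded as $g$ varies, so algebro-geometric asymptotic estimates on $\oM_{g,n}$ (which typically fix $n$) do not directly apply. One must invoke Aggarwal's combinatorial identities to extract cancellations among the many terms generated by large $n$, reorganizing them so that the tail of the expansion can be bounded by controlling only a handful of symmetric functions of $\mu$. The hypothesis that all entries are $\geq r$ enters precisely here: it enforces $n\leq |\mu|/r$ and prevents the combinatorial explosion that would otherwise destroy the bound. Pushing through this uniform control, while simultaneously tracking the two small parameters $1/g$ and $1/(|\mu|-1)_\ell$, is the technical heart of the argument.
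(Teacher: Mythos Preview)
Your proposed route through the Chen--M\"oller--Sauvaget intersection formula is genuinely different from what the paper does, but it contains a real gap at its second step. You assert that, because $v(\mu)$ is symmetric in the $k_i$, ``the relevant sum can be reorganized so that the dependence on $\mu$ reduces to $g=g(\mu)$, $n=n(\mu)$, and the decreasing Pochhammer symbols $(|\mu|-1)_\ell$.'' Symmetry only tells you that $v(\mu)$ is a function of the symmetric polynomials in the $k_i$; it does not tell you that only $|\mu|$ and $n$ matter and that the higher power sums $\sum k_i^2,\sum k_i^3,\ldots$ drop out. That collapse is precisely the surprising content of the theorem---the paper itself flags it as ``somehow surprising'' in the comments following the statement---and your sketch assumes it rather than proving it. Without it, the downstream steps (Virasoro recurrences on the intersection-number coefficients, uniform tail bounds) are tracking infinitely many symmetric functions of $\mu$, not two, and there is no mechanism in your outline for the required cancellations. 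A secondary issue is that Aggarwal's combinatorial estimates in \cite{Agg},\cite{Agg1} are tailored to the Eskin--Okounkov/volume-recursion side, not to $\psi/\kappa$-intersection numbers on $\oM_{g,n}$, so ``invoking'' them for uniform-in-$n$ control of the latter is not a step one can take for free.

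The paper avoids this obstruction by never unpacking the intersection formula. It uses the volume recursion~\eqref{for:ind2} to prove (Proposition~\ref{prop:wkappa}) an estimate
\[
v(\mu)=v(\mu^{(0)})+\sum_{i\geq 1}\wk_i\,\frac{v(\mu^{(i)})}{(2g-3+n)_{2i}}+O_r(g^{-r-1}),
\]
where each $\mu^{(i)}$ has one fewer part than $\mu$ and the coefficients $\wk_i$ are \emph{universal}: they depend only on $i$, not on $\mu$. This universality, established through the combinatorial bounds of Lemmas~\ref{lem:est1}--\ref{lem:est4}, is exactly what forces the asymptotics to depend only on $(g,n)$. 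Iterating the relation collapses any $\mu$ to the minimal stratum, whose one-variable expansion is already known (Proposition~\ref{prop:kappa}); the two-variable expansion is then assembled by a short induction on $r$ using the basis $Q_{m,\ell}(g,n)=g^{-m}/(2g-3+n)_\ell$ (Section~3). If you want to salvage your approach, the missing ingredient is an \emph{a priori} argument, on the intersection-number side, that the higher symmetric functions of the $k_i$ contribute only at order $O_r(g^{-r})$; that is a nontrivial statement not supplied by symmetry alone.
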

The first coefficients of the asymptotic expansion are given by
$$
v(\mu)=4+ \left[\frac{-2\pi^2}{3\, (|\mu|-1)}\right] +\left[ \frac{-27\pi^2+\pi^4}{72\, g^2}+ \frac{\pi^{2}}{6\, (|\mu|-1)_2}\right]+\ldots 
$$
The first two orders have been computed in~\cite{Agg} and~\cite{CheMoeSau}.

\subsection{Comments and related works} Recently, the large genus asymptotics of numerical  invariants associated to moduli spaces of curves has interested geometers and physicists. For example, the asymptotic expansion of Weil-Peterson volumes was studied in~\cite{MirZog}, and~\cite{LiuMulXu}, while the asymptotic bahavior of Gromov-Witten invariants and Hurwitz numbers was studied in~\cite{Zin}, or~\cite{Dub2}. 

These problems often admits a counterpart in terms of asymptotic dynamical properties of random surfaces (see~\cite{MirPet} or~\cite{Mir2}), or in string theory: one can either consider systems with a large number of particles (see~\cite{GhoRaju}) or the asymptotic behavior of perturbative expansions (see~\cite{CouMarSch}). 
\bigskip

Theorem~\ref{th:main} can be seen as the analogue of the results of Zograf and Mirzakhani for Weil-Peterson volumes (of moduli of hyperbolic structures) in the context of flat geometry. However two special features for Masur-Veech volumes can be noticed.
\begin{itemize}
\item The notion of convergence that we use in the text may seem restrictive at first sight as we only consider flat surfaces with sufficiently big singularities. However, this restriction is already needed in the case $r=2$  (see~\cite{CheMoeSau}). Indeed, we have $v(\mu+(1))=v(\mu)$, thus the first correction term ($-\frac{2\pi^2}{3(|\mu|-1)}$) can only be computed when we impose that all entries of $\mu$ are greater than $1$. More generally, the behavior of Masur-Veech volumes in the small singularities range is piece-wise linear. Therefore the asymptotic expansion of Masur-Veech volumes of general strata cannot be expressed with symmetric rational functions.
\item As for the Weil-Peterson volumes, the asymptotic expansion of reduced Masur-Veech volumes is computed in terms of $g$ and $n$  (in the large singularity range). However, $v$ is a function defined on the set of partition. Thus, it is somehow surprising that  its asymptotic behavior only depends on the first two symmetric functions (length and size). 
\item Here we use two equations to compute the coefficient of the asymptotic expansion: the induction formula for minimal strata of~\cite{Sau4} and the volume recursion of~\cite{CheMoeSau}. The graphs underlying this induction formulas are of compact type and the terms involved are very rapidly divergent series. This makes the computation of the asymptotic expansion of Masur-Veech volume easier than the one of Weil-Peterson volume which is based on the Virasoro constraint and the topological recursion (see~\cite{MirZog}. 
\end{itemize}

Finally, let us mention that  A. Eskin and A. Zorich proposed a series of four conjectures about the limits of numerical invariants of strata of abelian differentials: they considered Masur-Veech volumes, area Siegel-Veech constants, and refinements of these two functions according to spin parity (see~\cite{EskZor}). All conjectures were solved in~\cite{Agg},~\cite{Agg1} and~\cite{CheMoeSau}. Using the arguments of the present text, one can show that these four functions admit asymptotic expansions in the spirit of~Theorem~\ref{th:main}.  However, we only consider the Masur-Veech volumes to keep the presentation short and clear.

\subsection*{Acknowledgements} I would like to thank A. Aggarwal, D.~Chen, M.~Mo\"eller, and A.~Zorich for useful discussions on the subject. 

\section{The volume recursion}

It has been shown in~\cite{CheMoeSau} and~\cite{Sau4} that the Masur-Veech volumes can be computed inductively. In this section we use these induction formulas to state two estimates on Masur-Veech volumes.

\subsection{Notation}  Before presenting the volume recursion, we fix some general notation for partitions and compositions.

 Let $n$ be a non-negative integer and $m$ a positive integer. A   {\em composition}  (resp. {\em non-negative composition}) 
 is a $m$-uple of positive (resp. non-negative) 
 integers $\underline{d}=(d_1,\ldots,d_m)$ summing to $n$. We denote by $\cC_n(m)$ and $\cG_n(m)$ 
 the sets of compositions and non-negative compositions 
 of $n$ of length $m$. 

Let $E$ be a set and $m\geq 1$. A {\em partition} of $E$ of length $m$ is a list of $m$ disjoint sub-sets $\alpha=(\alpha_1,\alpha_2,\ldots,\alpha_m)$ of $E$ such that $E=\alpha_1\sqcup \alpha_2\ldots \sqcup \alpha_m$. We denote by $\cP(E)_m$, the set of partitions of $E$ of length $m$. If $E$ is of cardinal $n$ then $\cP(E)_m$ has cardinal $m^n$. If $\underline{d}$ is a non-negative partition of $n$, then we denote by $\cP(E,\underline{j})$ the set of partitions $\alpha$ such that the cardinal of $\alpha_i$ is $d_i$ for all $1\leq i\leq m$. 

If $\mu=(k_1,\ldots,k_n)$ be a vector of positive integers and $E=\{i_1,\ldots,i_\ell\}$ is a subset of $[\![1,n]\!]$ then we denote by $\mu_{E}$ the vector $(k_{i_1},\ldots,k_{i_\ell})$. Moreover if $\mu$ and $\mu'$ are two vectors of positive integers, then we denote by $\mu+\mu'$ the concatenation of the two vectors.

We finish this section by recalling the following inequality between factorial numbers proved in~\cite{Agg} (see Lemma~2.5).
\begin{lemma}\label{lem:factorial}
Let $A_1, A_2,C_1$ and $C_2$ be non-negative integers. We have the following inequality
$$
(A_1+C_1)!(A_2+C_2)!\leq \left(A_1+A_2+{\rm max}(C_1,C_2)\right)! \ \left({\rm min}(C_1,C_2)\right)!
$$
\end{lemma}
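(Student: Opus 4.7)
The plan is to symmetrize the inequality and then reduce it to a term-by-term comparison of products of consecutive integers. Since swapping the pairs $(A_1,C_1)\leftrightarrow(A_2,C_2)$ leaves the statement unchanged, I may assume without loss of generality that $C_1\geq C_2$, so that $\max(C_1,C_2)=C_1$ and $\min(C_1,C_2)=C_2$. The inequality to prove then becomes
$$(A_1+C_1)!\,(A_2+C_2)!\leq (A_1+A_2+C_1)!\cdot C_2!,$$
or, after dividing both sides by $(A_1+C_1)!\cdot C_2!$,
$$\frac{(A_2+C_2)!}{C_2!}\leq \frac{(A_1+A_2+C_1)!}{(A_1+C_1)!}.$$

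Now both sides are products of exactly $A_2$ consecutive positive integers: the left-hand side equals $\prod_{i=1}^{A_2}(C_2+i)$ and the right-hand side equals $\prod_{i=1}^{A_2}(A_1+C_1+i)$. Because $A_1\geq 0$ and $C_1\geq C_2$, each factor $A_1+C_1+i$ on the right dominates the corresponding factor $C_2+i$ on the left, and multiplying these pointwise inequalities yields the claim. An equivalent way to phrase the same argument is that the inequality expresses the monotonicity in $N$ of the binomial coefficient $\binom{N}{A_2}$, applied to $N=A_2+C_2$ and $N=A_1+A_2+C_1$, where the latter is at least the former precisely because $A_1+C_1\geq C_2$. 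There is no genuine obstacle here: the only substantive observation is that, once the right variables are grouped, both sides factor as rising products of the same length $A_2$, after which the comparison is immediate.
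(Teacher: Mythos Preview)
Your argument is correct. After the harmless symmetry reduction to $C_1\geq C_2$, the inequality is equivalent to
\[
\frac{(A_2+C_2)!}{C_2!}\;\leq\;\frac{(A_1+A_2+C_1)!}{(A_1+C_1)!},
\]
and both sides are rising products of length $A_2$ starting at $C_2+1$ and $A_1+C_1+1$ respectively; since $A_1+C_1\geq C_2$, the termwise comparison goes through (the case $A_2=0$ being trivial).

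As for comparison with the paper: the paper does not actually give a proof of this lemma but simply quotes it from Aggarwal's work (Lemma~2.5 of~\cite{Agg}). Your write-up therefore supplies a short self-contained argument where the paper only provides a citation; the approach you use is the standard one and is essentially the same as what one finds in the cited reference.
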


\subsection{Induction formulas} In order to state the induction formulas for Masur-Veech volumes, we introduce the following notation:
$$
a(\mu) = \frac{|\mu|! }{2(2\pi)^{2g(\mu)}}\cdot v(\mu) \in \QQ.
$$
Besides, we define the two following formal series in $\QQ[[t]]$:
$$
\cA(t)=1+\sum_{g\geq 1} a(2g-1) t^{2g}, \text{ and }\, \cS(t)=\frac{t/2}{{\rm sinh}(t/2)}.
$$
\begin{proposition}
The numbers $a(\mu)$ are determined by the two following induction relations.
\begin{itemize}
\item  For all $g\geq 1,$ we have:
\begin{equation}\label{for:ind1}
[t^{2g}] \cA(t)^{2g}= (2g)! [t^{2g}] \cS(t);
\end{equation}
\item If $\mu=(k_1,\ldots,k_n)$ has length at least two, then
\begin{equation}\label{for:ind2}
a(\mu)=\sum_{m>0}\frac{|\mu|}{m}\!\!\!\!\!\!\!\!\!\! \sum_{\begin{smallmatrix}
\alpha\in \cP([\![3,n]\!])_m,  \\
\underline{d}\in\cC_{k_1}(m), \underline{d'}\in\cC_{k_2}(m) \end{smallmatrix}} \!\!\!\!\!\!\!\!\!\! \Bigg( \prod_{i=1}^m  a(\mu_{\alpha_i}+(d_i+d_i'-1))\Bigg).
\end{equation}
\end{itemize}
\end{proposition}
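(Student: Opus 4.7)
The plan is to derive both identities as reformulations of two results already in the literature: identity~(\ref{for:ind1}) from the minimal-stratum induction of~\cite{Sau4}, and identity~(\ref{for:ind2}) from the volume recursion of~\cite{CheMoeSau}. In each case the substantive content is already present in the cited source, and the remaining work is the algebraic translation between the Masur-Veech volumes $v(\mu)$ and the rationalized quantities $a(\mu)=\frac{|\mu|!}{2(2\pi)^{2g(\mu)}}v(\mu)$.

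For~(\ref{for:ind1}) I would start from the computation in~\cite{Sau4} of the Masur-Veech volumes of the minimal strata $\H(2g-1)$, whose transcendental content is precisely the series $\cS(t)=(t/2)/\sinh(t/2)$. The normalization defining $a(\mu)$ is designed so that the powers of $\pi$ and the factorial factors in $v(2g-1)$ are absorbed, making the $a(2g-1)$ rational numbers. Packaging them into $\cA(t)$ and applying the Lagrange inversion step of~\cite{Sau4} converts the rooted-tree sum appearing there into the coefficient identity $[t^{2g}]\cA(t)^{2g}=(2g)![t^{2g}]\cS(t)$.

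For~(\ref{for:ind2}) I would translate the volume recursion of~\cite{CheMoeSau}, which expresses the volume of a stratum of length at least two as a sum over boundary degenerations separating two distinguished marked points $x_1,x_2$ from the rest of the configuration. A degeneration producing $m$ components contributes: a partition $\alpha\in\cP([\![3,n]\!])_m$ distributing the remaining marked points among the components; a pair of compositions $\underline{d}\in\cC_{k_1}(m)$ and $\underline{d'}\in\cC_{k_2}(m)$ recording how the zero orders at $x_1$ and $x_2$ are split across the components; and the volume of each resulting smaller stratum $\H(\mu_{\alpha_i}+(d_i+d_i'-1))$, where the extra entry is the order of the twisted differential at the node joining component $i$ to the chain.

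The main obstacle is the combinatorial bookkeeping: the recursion in~\cite{CheMoeSau} is naturally written with $\psi$-class intersection numbers, automorphism factors of the boundary graphs, and powers of $2\pi$ coming from the period integrals. Reorganizing these against the rescaling factor $|\mu|!/(2(2\pi)^{2g(\mu)})$ requires multinomial identities in the spirit of Lemma~\ref{lem:factorial}, together with care in matching conventions between the two sources. Once the dust settles, each term factors as $\prod_i a(\mu_{\alpha_i}+(d_i+d_i'-1))$ and the surviving scalar coefficient collapses to $|\mu|/m$, with $|\mu|$ coming from the total dimension of the stratum and $1/m$ reflecting the symmetry of the chain of $m$ nodes.
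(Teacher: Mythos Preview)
Your approach matches the paper's: the proposition is stated there without proof, simply as a restatement of the induction formulas established in~\cite{Sau4} (for the minimal strata, yielding~(\ref{for:ind1})) and~\cite{CheMoeSau} (for the general volume recursion, yielding~(\ref{for:ind2})). The paper offers no argument beyond these citations, so your plan to translate the two cited results into the normalized $a(\mu)$ variables is exactly the intended content; the only caveat is that the bookkeeping you mention does not actually require Lemma~\ref{lem:factorial}, which is a factorial inequality used later for estimates rather than an identity used in the derivation.
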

Using formula~\eqref{for:ind1}, we already proved in~\cite{Sau4} the following asymptotic expansion of $v(2g-1)$.
\begin{proposition}\label{prop:kappa}
There exists a family of coefficients $(\kappa_i)_{i\in \NN}$ in $\QQ[\pi^2]$ such that for all $r\in \NN$, we have
\begin{equation}\label{for:estkappa}
v(2g-1)=\sum_{0\leq i\leq r} \frac{\kappa_i}{g^i} + O(g^{-r-1}).
\end{equation}
\end{proposition}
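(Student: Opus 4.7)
The plan is to unfold the recursion~\eqref{for:ind1}. Writing $w(g) := v(2g-1)$ and using $a(2g-1) = \frac{(2g-1)!}{2(2\pi)^{2g}}w(g)$, I would isolate the $k=1$ summand in the binomial expansion $\cA(t)^{2g} = \sum_{k\geq 0}\binom{2g}{k}(\cA(t)-1)^{k}$; this rewrites~\eqref{for:ind1} as
\[
w(g) = 2(2\pi)^{2g}\sigma_g - \cN(g), \qquad \sigma_g := [t^{2g}]\cS(t),
\]
where the nonlinear correction is
\[
\cN(g) = \sum_{k=2}^{g}\frac{1}{k!\,(2g-k)!\,2^{k-1}}\sum_{\substack{g_1+\cdots+g_k=g\\g_i\geq 1}}\prod_{i=1}^{k}(2g_i-1)!\,w(g_i).
\]

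The source would be handled directly. From the closed formula $\sigma_g = (2^{1-2g}-1)B_{2g}/(2g)!$ together with $(2\pi)^{2g}|B_{2g}|/(2g)! = 2\zeta(2g) = 2 + O(2^{-2g})$, the quantity $2(2\pi)^{2g}\sigma_g$ is asymptotic to a $\QQ[\pi^2]$-valued constant $\kappa_0$ with exponentially small corrections in $g$. In particular, the genuine $1/g$ contributions to $w(g)$ all come from $\cN(g)$.

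The expansion~\eqref{for:estkappa} would then be proved by strong induction on $r$. Assuming the expansion holds at order $r-1$ uniformly in $g$, substitute the inductive expansion of each $w(g_i)$ into $\cN(g)$ and invoke Lemma~\ref{lem:factorial} iteratively: the product $\prod(2g_i-1)!$ concentrates on compositions where a single index $g_j$ is close to $g$ and the remaining $g_i$ are bounded. Isolating the ``large'' index, Taylor-expanding $w(g-m)$ around $w(g)$ for small shifts $m$, and packaging the bounded-index contributions as the formal Borel-type sum $\sum_{j\geq 1}(2g)^{-2j}(2j-1)!\,w(j)$ truncated at $j\leq r$, one obtains $\cN(g) = \sum_{i\geq 1}\kappa'_i/g^i + O(g^{-r-1})$ with $\kappa'_i\in\QQ[\pi^2]$. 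Since the $k$-fold contribution carries an intrinsic size $g^{-(k-1)}$ after concentration, only finitely many $k$ enter each order, and the induction closes with $\kappa_r\in\QQ[\pi^2]$ explicitly determined.

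The main obstacle will be the last rearrangement: one must simultaneously truncate in $k$, in the small-index Borel-type sums, and in the Taylor expansion of $w(g-m)$, and argue that the combined error is $O(g^{-r-1})$. The factorial inequality of Aggarwal (Lemma~\ref{lem:factorial}) provides exactly the uniform cross-term bounds needed to control the interaction of $(2g_i-1)!$ with $(2g-k)!$ as $k$ varies, which is what makes the rearrangement into a genuine $1/g$-expansion rigorous.
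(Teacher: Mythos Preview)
The paper does not actually prove this proposition; it records it as already established in~\cite{Sau4} via formula~\eqref{for:ind1}. Your route---isolating the $k=1$ term in the binomial expansion of $\cA(t)^{2g}$ to write $w(g)$ as a source term plus a nonlinear correction $\cN(g)$, then extracting the $1/g$-expansion of $\cN(g)$ by concentration on a single large part---is the natural way to unfold~\eqref{for:ind1} and is presumably what~\cite{Sau4} does.

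There is, however, a genuine slip in your treatment of the source term. With $\cS(t)=(t/2)/\sinh(t/2)$ as written in the paper, your formula $\sigma_g=(2^{1-2g}-1)B_{2g}/(2g)!$ is correct, but $B_{2g}$ carries the sign $(-1)^{g+1}$, so
\[
2(2\pi)^{2g}\sigma_g \;=\; 4(-1)^{g}\,\bigl(1-2^{1-2g}\bigr)\,\zeta(2g),
\]
which oscillates between values close to $+4$ and $-4$; it does \emph{not} approach a constant. You have silently discarded this sign by passing to $|B_{2g}|$. If the source really oscillated, your whole decomposition would collapse: since $w(g)\to 4$, the correction $\cN(g)$ would then have to carry a non-decaying oscillating piece of size $O(1)$, contradicting the $O(1/g)$ bound that your concentration argument correctly gives. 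The resolution is almost certainly a normalization issue in the paper (one should read $(2i\pi)^{2g}$ in the definition of $a(\mu)$, or equivalently $\sin$ rather than $\sinh$ in $\cS$); under that convention the source is $4+O(4^{-g})$ and your argument is valid. You should flag and fix this explicitly rather than take absolute values.

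Modulo that point, the outline for $\cN(g)$ is sound: Lemma~\ref{lem:factorial} does force concentration on compositions with one large $g_j$, the $k$-fold contribution is genuinely of order $g^{-(k-1)}$, so only finitely many $k$ enter at each order, and feeding the order-$(r-1)$ expansion of $w$ into the large factor closes the induction with coefficients in $\QQ[\pi^2]$ (each $w(j)$ lying in $\QQ\cdot\pi^{2j}$).
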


Now let $\mu=(k_1,\ldots,k_n)$ be a profile of length at least two. The value of the total sum in the right-hand side of Formula~\eqref{for:ind2} does not depend on the order of the $k_i's$, however each summand does depends on the choice of the two first terms. Thus, in all the text we will assume that $k_1$ and $k_2$ are the smallest entries of $\mu$ (this assumption is not necessary but simplifies the proof of certain estimates). 

For $k\geq 1$ we denote by $\mu^{(k)}$ the profile $(k_1+k_2-1-2k,k_3,\ldots,k_n)$. The main purpose of the section will be to prove the following estimates (using Formula~\eqref{for:ind2}). 
\begin{proposition}\label{prop:wkappa}
There exists a family of coefficients $(\wk_i)_{i\in \NN^*}$ in $\QQ[\pi^{2}]$ such that for all $r\in \NN$ and all $\mu\in {\rm Adm}(r)$ of length at least two, we have
\begin{equation}\label{for:estwkappa}
{v(\mu)}=v(\mu^{(0)})+ \wk_1\cdot \frac{v\left(\mu^{(1)}\right)}{(2g-3+n)_2}+\wk_2\cdot \frac{v\left(\mu^{(2)}\right)}{(2g-3+n)_4} +\ldots + O_r(g^{-r-1}). 
\end{equation}
\end{proposition}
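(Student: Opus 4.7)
I plan to unfold the recursion~\eqref{for:ind2} directly in terms of $v$ and identify which terms feed each order of the desired expansion. Substituting $a(\mu)=\frac{|\mu|!}{2(2\pi)^{2g(\mu)}}v(\mu)$ into~\eqref{for:ind2} and using that the component genera in any compact-type partition $\alpha\in\cP([\![3,n]\!])_m$ sum to $g(\mu)$ (so that all powers of $2\pi$ cancel), one arrives at
$$v(\mu)=\sum_{m\geq 1}\frac{1}{2^{m-1}\,m\,(|\mu|-1)!}\sum_{\alpha,\underline{d},\underline{d'}}\prod_{i=1}^m N_i!\,v\!\left(\mu_{\alpha_i}+(d_i+d_i'-1)\right),$$
where $N_i=|\mu_{\alpha_i}|+d_i+d_i'-1$. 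The $m=1$ summand (with $\alpha=([\![3,n]\!])$ and $N_1=|\mu|-1$) reduces to $v(\mu^{(0)})$, the leading term of~\eqref{for:estwkappa}.

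For $m\geq 2$, I group configurations by the number $s$ of indices $i$ with $\alpha_i\neq\emptyset$. In the \emph{concentrated} case $s=1$, a unique $\alpha_{i_0}=[\![3,n]\!]$ absorbs all marked points, so the other $m-1$ components are minimal strata $(2g_i-1)$ with $g_i=(d_i+d_i')/2$ forced by admissibility to be a positive integer, and the nontrivial component has profile exactly $\mu^{(k)}$ where $k=\sum_{i\neq i_0}g_i$. Counting the $m$ placements of $i_0$ (which cancels the $1/m$) and the $2g_i-1$ pairs $(d_i,d_i')$ with $d_i,d_i'\geq 1,\,d_i+d_i'=2g_i$, and invoking the identity $(|\mu|-1)_{2k}=(2g-3+n)_{2k}$, the concentrated contribution reorganizes into
$$\sum_{k\geq 1}\frac{\wk_k\,v(\mu^{(k)})}{(2g-3+n)_{2k}},\qquad \wk_k:=\sum_{m\geq 1}\frac{1}{2^m}\sum_{\substack{g_1+\ldots+g_m=k\\ g_i\geq 1}}\prod_{i=1}^m(2g_i-1)(2g_i-1)!\,v(2g_i-1).$$
Each $v(2g_i-1)\in\QQ[\pi^2]$ by~\eqref{for:ind1}, so $\wk_k\in\QQ[\pi^2]$. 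Since $v(\mu^{(k)})$ is uniformly bounded (by the $r=0$ asymptotics of~\cite{Agg,CheMoeSau}) and $(2g-3+n)_{2k}=\Theta(g^{2k})$, truncating the $k$-sum at $k\leq\lceil r/2\rceil$ leaves a tail $O_r(g^{-r-1})$.

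For the \emph{mixed} contributions $s\geq 2$ there are at least two nonempty blocks; since $\mu\in{\rm Adm}(r)$, each such block satisfies $|\mu_{\alpha_i}|\geq r$ and hence $N_i\geq r+1$. I apply Lemma~\ref{lem:factorial} iteratively with $A_j=|\mu_{\alpha_j}|,\,C_j=d_j+d_j'-1$ (or the symmetric choice), pairing up nonempty components so that each such pairing compresses two factorials into a single one while freeing the equivalent of $r+1$ extra decay factors against $(|\mu|-1)!$. Combined with the geometric prefactor $1/2^{m-1}$, a uniform bound on the $v$-values at the interior nodes (from the $r=0$ bound), and the counts $|\cP([\![3,n]\!])_m|=m^{n-2}$ and $|\cC_{k_i}(m)|=\binom{k_i-1}{m-1}$ for fixed block-size profile $(|\alpha_1|,\ldots,|\alpha_m|)$, the entire mixed contribution is shown to be $O_r(g^{-r-1})$. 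Boundary defects in the concentrated counting (configurations where $d_{i_0}<1$ or $d_{i_0}'<1$ would violate positivity) are handled analogously and absorbed into the same error.

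The main technical obstacle is controlling the mixed case uniformly in $m$ and in the block-size profile, since the combinatorial counts grow rapidly in $n\sim g$ and one must arrange the factorial comparisons sharply enough to outpace this growth. The cleanest route is to organize the sum by the ordered block-size sequence, apply Lemma~\ref{lem:factorial} within each such class to reduce $\prod_iN_i!$ to one maximal factorial times several small ones, and only then sum over $\alpha$ and $(\underline{d},\underline{d'})$ using the multinomial identity $\sum_{\underline{\ell}}\binom{n-2}{\ell_1,\ldots,\ell_m}=m^{n-2}$. An alternative bootstrap induction on $r$, using the weaker estimate for $r-1$ to control the interior $v$-values in the mixed case, also works. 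Once the mixed bound is in hand, the reorganization of the concentrated contribution into the claimed asymptotic series is a clean combinatorial identity and the coefficients $\wk_k$ are identified explicitly as above.
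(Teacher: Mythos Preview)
Your decomposition into the ``concentrated'' case $s=1$ and the ``mixed'' case $s\geq 2$ is exactly the paper's splitting $A_m=\overline{A}_m+m\,A_m^{n-2}$, and your formula for $\wk_k$ agrees with the paper's $\wk_k'$ after unwinding the definitions. The treatment of the concentrated piece and of the truncation of the $k$-series is correct and matches the paper.

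The gap is in the mixed bound. Your pairing argument, as written, applies Lemma~\ref{lem:factorial} with $C_{\min}\geq r$ to obtain $N_1!\,N_2!\leq (N_1+N_2-r)!\,r!$ uniformly over all partitions $\alpha$ with two prescribed nonempty blocks. But you must then sum over all such $\alpha$, and there are on the order of $2^{n-2}$ of them; since $n$ can be as large as $|\mu|/r\asymp g$, this exponential factor swamps the $1/(|\mu|-1)_{r+1}$ decay you have secured. The prefactor $1/2^{m-1}$ does not help (it equals $1/2$ when $m=2$), and the multinomial identity $\sum_{\underline{\ell}}\binom{n-2}{\ell_1,\ldots,\ell_m}=m^{n-2}$ that you invoke is precisely the source of the problem rather than its cure.

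The paper repairs this in two ways your sketch does not make explicit. First, large $m$ is handled separately via Aggarwal's bound $A_m(\mu)/a(\mu)\leq mC^m/|\mu|^{2m-2}$ (Lemma~\ref{lem:est1}), so the mixed estimate is only needed for $m<(r+3)/2$, and for those $m$ one reduces to $m=2$. Second, for $m=2$ the factorial lemma is applied with \emph{block-size-dependent} constants $C_1=r\ell$, $C_2=r(n-2-\ell)$ rather than the uniform $C=r$; this yields the $\ell$-dependent bound $\binom{n-2}{\ell}(|\mu|-2-r\ell)!(r\ell)!$, whose sum over $\ell$ is controllable. Even then the boundary cases $\ell=1$ and $\ell=n-3$ require a separate argument that exploits the convention that $k_1,k_2$ are the two smallest entries of $\mu$, a hypothesis you never invoke. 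Your alternative route via induction on $r$ is indeed what the paper does in Lemma~\ref{lem:est4}, but that induction still rests on the block-size-dependent estimate of Lemma~\ref{lem:est3}, so it does not bypass the sharper factorial comparison.
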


\subsection{Splitting the right-hand side of~\eqref{for:ind2}}

In order to prove Proposition~\ref{prop:wkappa}, we begin by splitting the sum in the right-hand side of formula~\eqref{for:ind2} in several parts that will be estimated separately.  
We define the following functions of $\mu$.
\begin{itemize}
\item For all $1\leq m\leq {\rm min}(k_1,k_2)$, we denote:
$$
A_m(\mu)=\sum_{\begin{smallmatrix}
\alpha\in \cP([\![3,n]\!])_m,  \\
\underline{d}\in\cC_{k_1}(m), \underline{d'}\in\cC_{k_2}(m) \end{smallmatrix}} \!\!\!\!\!\!\!\!\!\! \Bigg( \prod_{i=1}^m  a(\mu_{\alpha_i}+(d_i+d_i'-1))\Bigg).
$$
With this notation we have $a(\mu)=\sum_{m\geq 1} \frac{A_m(\mu)}{m}$.
\item For all $\mu$, $m\geq 2,$ and $0\leq \ell\leq n-2$, we denote 
$$
A^\ell_m(\mu)= \!\!\!\!\!\!\!\! \sum_{\begin{smallmatrix}
\alpha\in \cP([\![3,n]\!],(\ell,(n-2-\ell)),  \\
1\leq d\leq k_1, 1\leq d'\leq k_2 \end{smallmatrix}} \!\!\!\!\!\!\!\!\!\! a((d+d'-1)+\mu_{\alpha_1}) \cdot A_{m-1}((k_1-d,k_2-d')+(\mu_{\alpha_2}),
$$
and we have $A_{m}(\mu)=\sum_{0\leq \ell\leq n-2} A_{m}^{\ell}(\mu)$.
\item Finally for all $\mu, m, \ell,$ and $2\leq D\leq k_1+k_2-2m$, we denote
$$
A^{\ell,D}_{m}(\mu)= \!\!\!\!\!\!\!\! \sum_{\begin{smallmatrix}
\alpha\in \cP([\![3,n]\!],(\ell,(n-2-\ell)),  \\
1\leq d\leq k_1, 1\leq d'\leq k_2\\
d+d'=k_1+k_2 -D \end{smallmatrix}} \!\!\!\!\!\!\!\!\!\! a((d+d'-1)+\mu_{\alpha_1}) \cdot A_{m-1}((k_1-d,k_2-d')+(\mu_{\alpha_2})),
$$ 
and we have $A^{\ell}_m(\mu)=\sum_{D\geq 2} A^{\ell,D}_m(\mu)$.
 \end{itemize}
 Using this extra set indices, we write $\wk_i'= (2\pi)^{2i} \sum_{m\geq 2}\sum_{d+d'=2i} A_{m-1}(d,d')$ for all $i\geq 1$. We shall prove that these coefficients satisfy the assumption of Proposition~\ref{prop:kappa}. The following three Lemmas provide estimates of partial sums of the $A^{\ell,D}_m(\mu)$.
\begin{lemma}\label{lem:est1}
There exists a positive constant $C$ such that for all $\mu$ and for all $m\geq 2$, we have
$$
\frac{A_m(\mu)}{a(\mu)}< \frac{m C^{m}}{|\mu|^{2m-2}}.
$$ 
\end{lemma}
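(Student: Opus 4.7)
The plan is to expand each factor in $A_m(\mu)$ via the identity $a(\nu) = \frac{|\nu|!}{2(2\pi)^{2g(\nu)}}\,v(\nu)$. Since the genera of the pieces satisfy $\sum_i g(\mu_{\alpha_i}+(d_i+d_i'-1)) = g(\mu)$, all factors of $(2\pi)^{2g}$ cancel between $A_m(\mu)$ and $a(\mu)$, yielding
\begin{equation*}
\frac{A_m(\mu)}{a(\mu)} = \frac{2^{1-m}}{|\mu|!\,v(\mu)} \sum_{\alpha,\underline d,\underline d'} \Bigl(\prod_{i=1}^m s_i!\Bigr) \prod_{i=1}^m v\bigl(\mu_{\alpha_i}+(d_i+d_i'-1)\bigr),
\end{equation*}
where $s_i := |\mu_{\alpha_i}|+d_i+d_i'-1$ so that $\sum s_i = |\mu|-m$ and each $s_i \geq 1$.

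The next step is to use a uniform upper bound $v(\nu) \leq v_{\max}$ on the reduced volumes together with a positive lower bound on $v(\mu)$ itself (both available for admissible profiles from the literature). This reduces the estimate to a pure factorial-combinatorial sum. Applying Lemma~\ref{lem:factorial} iteratively to $\prod_i (|\mu_{\alpha_i}|+d_i+d_i'-1)!$ and combining with the multinomial inequality $\prod s_i! \leq (|\mu|-m)!$, one separates the contribution of the ``dominant'' partitions (where a single $\alpha_j$ concentrates most of $[\![3,n]\!]$) from more balanced ones.

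For the combinatorial count, the partitions contribute $|\cP([\![3,n]\!])_m| = m^{n-2}$ and the composition pairs contribute $\binom{k_1-1}{m-1}\binom{k_2-1}{m-1} \leq (k_1 k_2)^{m-1}/((m-1)!)^2$. Since $k_1 \leq k_2$ are assumed to be the two smallest entries of $\mu$, one has $k_1 k_2 \leq |\mu|^2/(n(n-1))$; this is precisely what supplies the factor $|\mu|^{2m-2}$ when combined with $(|\mu|)_m \geq c\,|\mu|^m$ coming from the ratio $(|\mu|-m)!/|\mu|!$, and the $((m-1)!)^{-2}$ absorbs into the $C^m$ constant.

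The hard part will be controlling $m^{n-2}$ against the factorial ratios, since this count grows exponentially in $n$ while the multinomial bound alone produces no compensating decay. The resolution is to split the sum according to the number $r$ of non-empty parts $\alpha_i$: there are at most $\binom{m}{r}\,r^{n-2}$ such partitions, but the corresponding product $\prod_i(|\mu_{\alpha_i}|+e_i)!$ is strictly smaller than the one-part extreme case, by at least $|\mu|^{r-1}$ (as one sees by applying Lemma~\ref{lem:factorial} iteratively to combine the $r$ non-empty pieces, each merge losing one power of $|\mu|$). This telescoping absorbs the exponential combinatorial growth and reintegrates to the claimed bound $A_m(\mu)/a(\mu) \leq m\,C^m/|\mu|^{2m-2}$, uniformly in $\mu$ and $m$.
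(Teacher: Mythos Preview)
The paper does not give a self-contained proof of this lemma; it simply cites Proposition~3.4 of~\cite{Agg1}. Your sketch is therefore an independent attempt rather than a variant of the paper's argument.

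The framework you set up is correct: the expansion via $a(\nu)=\tfrac{|\nu|!}{2(2\pi)^{2g(\nu)}}v(\nu)$, the cancellation of the $(2\pi)$-powers through $\sum_i g_i=g(\mu)$, and the reduction to a factorial estimate using uniform two-sided bounds on $v$. You also correctly identify the real obstacle, the factor $m^{n-2}$ coming from the ordered set-partitions of $[\![3,n]\!]$. The gap is in your resolution of that obstacle. Lemma~\ref{lem:factorial} is a rearrangement inequality, $(A_1+C_1)!(A_2+C_2)!\le(A_1+A_2+\max(C_1,C_2))!\,(\min(C_1,C_2))!$; it does not by itself produce a factor of $|\mu|$ per merge. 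The saving you want only materialises when each non-empty block has $|\mu_{\alpha_i}|$ comparable to $|\mu|$, which fails precisely when $\mu$ has many small entries. Concretely, for $\mu=(2,2,1,\ldots,1)$ with $n-2$ ones one has $m=2$ forced and a direct computation gives $A_2(\mu)/a(\mu)\asymp 1/|\mu|$, so the exponent $2m-2=2$ is unattainable on this family and no telescoping with Lemma~\ref{lem:factorial} will produce it. (This also indicates that the lemma as written is missing a hypothesis such as $\mu\in{\rm Adm}(2)$; note that the only place it is invoked, Lemma~\ref{lem:est3}, already assumes $\mu\in{\rm Adm}(r)$ with $r\ge 2$.) To repair your argument you must either impose a lower bound on the entries of $\mu$, so that $|\mu_{\alpha_i}|\ge r\,|\alpha_i|$ and each merge via Lemma~\ref{lem:factorial} genuinely gains a power of $|\mu|$, or abandon the crude multinomial bound and estimate the partition sum directly as Aggarwal does.
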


\begin{proof}
The Proof is given in~\cite{Agg1} (see Proposition~3.4). 
\end{proof}

\begin{lemma}\label{lem:est2}
Let $D>0$. There exists a constant $C>0$, such that for all  $\mu$ and $m\geq 2$, we have
$$
 \frac{{A}^{n-2,D}_m(\mu) + {A}^{0,k_1+k_2-D}_m(\mu)}{a(\mu)} < \frac{ C}{|\mu|^{D}}
$$
\end{lemma}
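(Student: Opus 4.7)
The plan is to bound $A^{n-2,D}_m(\mu)$ and $A^{0,k_1+k_2-D}_m(\mu)$ separately, each by a constant times $a(\mu)/|\mu|^D$.

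For the first term, $\ell = n-2$ forces $\alpha_1 = [\![3,n]\!]$ and $\alpha_2 = \emptyset$, so the first $a$-factor in the defining sum depends only on $d+d' = k_1+k_2-D$ and not on the individual pair. Setting $\mu_1 := (k_1+k_2-D-1, k_3, \ldots, k_n)$, one factors
\[
A^{n-2,D}_m(\mu) = a(\mu_1) \cdot \sum_{\substack{d+d' = k_1+k_2-D \\ 1\leq d\leq k_1,\ 1\leq d'\leq k_2}} A_{m-1}(k_1-d,\, k_2-d').
\]
Each summand depends only on the two-entry profile $(k_1-d, k_2-d')$ of total size $D$. Since $A_{m-1}(\ell_1,\ell_2)$ vanishes whenever $m-1 > \min(\ell_1,\ell_2)$, and $\min(\ell_1,\ell_2) \leq D/2$, only finitely many pairs $(m,\nu)$ contribute non-trivially, so the whole sum is bounded by a constant $M_D$ depending only on $D$, uniformly in $m$ and $\mu$. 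A direct computation using $g(\mu_1) = g(\mu) - D/2$ yields
\[
\frac{a(\mu_1)}{a(\mu)} = \frac{(2\pi)^{D}}{(|\mu|)_{D+1}} \cdot \frac{v(\mu_1)}{v(\mu)},
\]
which is $O_D(|\mu|^{-(D+1)})$ provided $v$ is uniformly bounded above and away from zero on admissible profiles.

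For the second term, $\alpha_1 = \emptyset$, $\alpha_2 = [\![3,n]\!]$ and $d+d' = D$, so the first $a$-factor is the constant $a(D-1)$, which vanishes unless $D$ is even. Writing $\nu_d := (k_1-d, k_2-d', k_3, \ldots, k_n)$,
\[
A^{0, k_1+k_2-D}_m(\mu) = a(D-1) \cdot \sum_{\substack{d+d' = D \\ 1\leq d\leq k_1,\ 1\leq d'\leq k_2}} A_{m-1}(\nu_d),
\]
with at most $D-1$ summands. Lemma~\ref{lem:est1} applied to $\nu_d$ (of total size $|\mu|-D$) gives
\[
A_{m-1}(\nu_d) \leq \frac{(m-1)C^{m-1}}{(|\mu|-D)^{2m-4}} \cdot a(\nu_d).
\]
Once $|\mu|$ is large enough that $(|\mu|-D)^2 \geq 2C$, the prefactor $(m-1)\bigl(C/(|\mu|-D)^2\bigr)^{m-2}$ is uniformly bounded in $m$; combined with $a(\nu_d)/a(\mu) = O_D(|\mu|^{-D})$, this yields the required bound.

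The finitely many profiles with small $|\mu|$ are handled by enlarging the constant. The genuine difficulty lies in producing the uniform two-sided bound on $v$ required by both arguments; I expect this to be propagated from the minimal-stratum asymptotics of Proposition~\ref{prop:kappa} through the recursion~\eqref{for:ind2}, and this bootstrap is the main subtle point of the proof.
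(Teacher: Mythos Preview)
Your argument is essentially the paper's own: for $A_m^{n-2,D}$ you factor out $a(\mu^{(D/2)})$ and bound the remaining finite sum $\sum A_{m-1}(k_1-d,k_2-d')$ by a constant depending only on $D$, exactly as the paper does; for $A_m^{0,k_1+k_2-D}$ you make explicit what the paper hides under ``by similar arguments'', invoking Lemma~\ref{lem:est1} to control $A_{m-1}(\nu_d)$ by $a(\nu_d)$ uniformly in $m$, and then comparing $a(\nu_d)$ to $a(\mu)$ via the factorial ratio. Both routes reduce to the same estimate $a(\mu^{(D/2)})/a(\mu)\asymp |\mu|^{-D-1}$ (resp.\ $a(\nu_d)/a(\mu)\asymp |\mu|^{-D}$), so there is no substantive divergence.

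The one point to correct is your closing paragraph. The uniform two--sided bound on $v(\mu)$ is \emph{not} something you need to bootstrap here: it is precisely the content of the first--order asymptotics $v(\mu)\to 4$ established in~\cite{Agg} (and independently in~\cite{CheMoeSau}), which the paper takes as a known input. Indeed the paper invokes ``the boundedness of the $v(\mu)$'' verbatim in the proof of Lemma~\ref{lem:est3}. So rather than flagging this as ``the main subtle point of the proof'', you should simply cite~\cite{Agg} and move on; once that is granted, your argument is complete.
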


\begin{proof}
Let us fix $D>0$. The set of triples $(m,d,d')$ such that $d+d'\leq D$ and $2\leq m\leq \rm{min}(d,d')$ is finite. Therefore, we can set $M={\rm max}_{m,d+d'\leq D} A_{m-1}(d,d')$. For all $\mu$ and $m$, we have
$$
A_{m}^{n-2,D}(\mu)\leq \sum_{\begin{smallmatrix}
1\leq d\leq k_1, 1\leq d'\leq k_2\\
d+d'=k_1+k_2 -D \end{smallmatrix}} M\cdot a(\mu^{(D/2)})\leq D\cdot M\cdot a(\mu^{(D/2)})
$$
(if $D$ is odd then the number $A_{m}^{\ell,D}(\mu)$ vanishes). By similar arguments one can show that there exists a constant $M'$ such that ${A}^{0,k_1+k_2-D}_m(\mu) \leq  D\cdot M' a(\mu^{(D/2)})$, thus finishing the proof.
\end{proof}

\begin{corollary}\label{cor:two}
For all $r\geq 2$. There exists a constant $C$ such that for all $\mu$ of length $2$, we have
$$
\left| v(\mu)- v(\mu^{(0)})-\!\!\!\! \sum_{1\leq i<(r-3)/2} \wk_i' v(\mu^{(i)})\right| < C \cdot g^{-r-1}.
$$
\end{corollary}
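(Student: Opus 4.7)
The plan is to apply the induction formula~\eqref{for:ind2} to $\mu=(k_1,k_2)$ of length~$2$, recognise the leading contribution as $v(\mu^{(0)})$, identify the finitely many correction terms with $\wk_i'\,v(\mu^{(i)})$, and use Lemmas~\ref{lem:est1} and~\ref{lem:est2} to bound the tail by $O_r(g^{-r-1})$. Since $\cP([\![3,2]\!])_m$ reduces to a single empty partition when $n=2$, the recursion collapses to
\begin{equation*}
a(\mu)=2g\,a(2g-1)+\sum_{m\geq 2}\frac{2g}{m}\sum_{D\geq 2}A_m^{0,D}(\mu),\qquad A_m^{0,D}(\mu)=a(2g-D-1)\!\!\!\!\!\!\!\sum_{\substack{e+e'=D\\ 1\leq e\leq k_1-1\\ 1\leq e'\leq k_2-1}}\!\!\!\!\!\!\!A_{m-1}((e,e')).
\end{equation*}
Under the conversion $v(\nu)=2(2\pi)^{2g(\nu)}a(\nu)/|\nu|!$, the isolated first term is exactly $v(\mu^{(0)})$, while each $A_m^{0,D}(\mu)$ appears with a prefactor $(2\pi)^{D}/(2g-1)_{D}$ multiplying $v(\mu^{(D/2)})$.

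I would next split the double sum at the threshold $D=r$. For $D\geq r$, Lemma~\ref{lem:est2} (applied with $\ell=n-2=0$, noting that for $n=2$ the two terms $A_m^{0,D}$ and $A_m^{0,k_1+k_2-D}$ of its statement coincide with the extremes of the same sum) bounds $A_m^{0,D}(\mu)$ by a constant times $a(\mu^{(D/2)})$; since $A_{m-1}(e,e')$ vanishes unless $m-1\leq \min(e,e')\leq D/2$, the sum over $m$ is automatically finite, and combining with the Pochhammer ratio $(2g-D-1)!/(2g)!\asymp g^{-D-1}$ the entire large-$D$ tail is absorbed into $O_r(g^{-r-1})$. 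Lemma~\ref{lem:est1} provides the analogous control should one prefer to truncate over $m$ first.

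In the remaining range $D=2i$ with $1\leq i<(r-3)/2$ (odd $D$ do not contribute since $(e,e')$ must be admissible), the hypothesis $\mu\in{\rm Adm}(r)$ forces $2i<r-3\leq \min(k_1,k_2)-3$, so the constraints $e\leq k_1-1$ and $e'\leq k_2-1$ are vacuous and the inner sum collapses to the $\mu$-independent constant $\sum_{e+e'=2i}A_{m-1}((e,e'))$. Reading off the coefficient of $v(\mu^{(i)})$ then matches $\wk_i'$ after absorbing the factors $(2\pi)^{2i}$ and $(2g-1)_{2i}$ produced by the $a\leftrightarrow v$ conversion. The main obstacle is this bookkeeping itself: verifying that the factors $1/m$, $(2\pi)^{D}$, and $(2g-1)_{D}$ generated by the induction reassemble into exactly the prescribed constants $\wk_i'$ on the main range, while the two lemmas combine to give a single uniform remainder $O_r(g^{-r-1})$ that does not deteriorate when both $k_1$ and $k_2$ are close to their minimum value $r$.
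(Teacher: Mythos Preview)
Your overall plan is right, but the split at $D=r$ does not isolate the tail, and this breaks both the tail estimate and the identification of the main coefficients.

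The decomposition $A_m(\mu)=\sum_{D\geq 2} A_m^{0,D}(\mu)$ is asymmetric in the $m$ parts: it separates out the \emph{first} component $(d_1,d'_1)$. A ``main-term'' composition (one with a single large part $d_j+d'_j\geq k_1+k_2-r$) lands in the range $\{D\leq r\}$ only when $j=1$; for the remaining $m-1$ positions of the large part the first component is small, so that term sits in your tail $\{D\geq r\}$. Hence $\sum_{D\geq r}\frac{2g}{m}A_m^{0,D}(\mu)$ still carries contributions of order $g^{-2},g^{-4},\ldots$ and is not $O_r(g^{-r-1})$. Correspondingly, the coefficient you actually extract in front of $v(\mu^{(i)})$ is
\[
(2\pi)^{2i}\sum_{m\geq 2}\frac{1}{m}\sum_{e+e'=2i}A_{m-1}((e,e')),
\]
which is \emph{not} $\wk_i'=(2\pi)^{2i}\sum_{m\geq 2}\sum_{e+e'=2i}A_{m-1}((e,e'))$; the missing factor $m$ is exactly the multiplicity of positions for the large part.

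The paper avoids this by first isolating the ``no large part'' piece $A_m'(\mu)$ and using that, for $g$ large, $A_m(\mu)=A_m'(\mu)+m\sum_{D\leq r}A_m^{0,D}(\mu)$; the factor $m$ then cancels the $1/m$ from the recursion and produces $\wk_i'$. The genuine tail $A_m'(\mu)$ is controlled by the middle-range sum $\sum_{r<D<k_1+k_2-r}A_m^{0,D}(\mu)$, where the product $(2g-D-1)!\,D!$ is uniformly small. Note also that Lemma~\ref{lem:est2} is stated for a \emph{fixed} $D$ with a $D$-dependent constant; invoking it simultaneously for all $D\geq r$, as you propose, does not yield a uniform bound.
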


\begin{proof}
We introduce the following notation
$$
A_{m}'(\mu)=\sum_{\begin{smallmatrix}\underline{d}\in \cC_{k_1}(m), \underline{d'}\in \cC_{k_2}(m)\\
{\rm max}_{1\leq i\leq m} d_i+d_i'< k_1+k_2-r\end{smallmatrix}}
\prod_{i=1}^n a(d_i+d_i'-1).
$$
If $g$ is large enough (bigger than $m\times r$) then 
$$
A_{m}'(\mu)\leq m \sum_{r<D<k_1+k_2-r} A_{m}^{0,D}(\mu).
$$
Thus there exists a constant $K$ such that for all $\mu$ of length $2$ and $m<(r-3)/2$, we  have
$$
\frac{A_{m}'(\mu)}{a(\mu)}<\frac{K}{g^{r+1}}
$$
Now for $g$ large enough we have the identity: 
$$
A_m(\mu)=A_m'(\mu)+ m \sum_{D\leq r} A_{m}^{0,D}(\mu).
$$ Indeed to sum over all compositions of $k_1+k_2$ of length $m$  with a big maximal value, one only need to chose the value and the position $1\leq i\leq m$ of this maximum. Thus summing over all values of $m$, there exists a constant $C>0$ such that:
\begin{equation}\label{for:kappap}
\left| a(\mu)-a(\mu^{(0)}) - \sum_{2\leq m<(r-3)/2}\sum_{D\leq r} A_{m}^{0,D}(\mu)  \right| <\frac{C}{g^{r+1}} a(\mu).
\end{equation}
The sum in the left-hand side can be rewritten as
$$
\sum_{D\leq r} a(\mu^{(D/2)}) \left(\sum_{m\geq 2}\sum_{d+d'=D} A_{m-1}(d,d')\right)= \sum_{D\leq r} a(\mu^{(D/2)}) \frac{\wk_{D/2}'}{(2\pi)^{D}}
$$
(recall that for odd $D$, we have $A_m^{0,D}=0$). We get the desired inequality by multiplying~\eqref{for:kappap}  by $2(2\pi)^{2g}/(2g-2+n)!$.

\end{proof}

\begin{lemma}\label{lem:est3}
We fix $r\geq 2$. We denote $\widehat{A}_m(\mu)=\sum_{1\leq \ell \leq n-3} A_m^\ell(\mu)$. There exists a constant $C>0$, such that for all  $\mu\in {\rm Adm}(r)$ and $2\leq m< (r+3)/2 $, we have
$$
\frac{\widehat{A}_m(\mu)}{a(\mu)}< \frac{ C}{|\mu|^{r+1}}.
$$
\end{lemma}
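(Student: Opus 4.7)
The plan is to estimate each summand $A_m^\ell(\mu)$ of $\widehat{A}_m(\mu)$ individually and then sum over $1\le \ell\le n-3$. For a single summand, set $\mu_1 = (d+d'-1)+\mu_{\alpha_1}$ and $\mu_2 = (k_1-d,k_2-d')+\mu_{\alpha_2}$, so that $|\mu_1|+|\mu_2|=|\mu|-1$ and $g(\mu_1)+g(\mu_2)=g(\mu)$. First apply Lemma~\ref{lem:est1} to the inner factor, bounding $A_{m-1}(\mu_2)\le (m-1)C^{m-1}|\mu_2|^{-(2m-4)}a(\mu_2)$. Next use the explicit formula $a(\nu)=|\nu|!\,v(\nu)/[2(2\pi)^{2g(\nu)}]$ together with uniform upper bounds on $v$ on admissible profiles and a uniform lower bound on $v$ on $\mathrm{Adm}(r)$ (both standard consequences of Masur--Veech finiteness and of the convergence $v\to 4$) to obtain
$$
\frac{a(\mu_1)\,a(\mu_2)}{a(\mu)}\le K\cdot \frac{|\mu_1|!\,|\mu_2|!}{|\mu|!}.
$$

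The key new feature of $\widehat{A}_m$, as opposed to the extremal terms treated in Lemma~\ref{lem:est2}, is that the constraint $1\le \ell\le n-3$ combined with $\mu\in\mathrm{Adm}(r)$ forces both $|\mu_{\alpha_1}|\ge r$ and $|\mu_{\alpha_2}|\ge r$: neither side of the convolution is purely built from fragments of $k_1$ and $k_2$. Setting $s=\min(|\mu_1|,|\mu_2|)\ge r$ this yields
$$
\frac{|\mu_1|!\,|\mu_2|!}{|\mu|!} \;=\; \frac{1}{|\mu|\binom{|\mu|-1}{|\mu_1|}} \;\le\; \frac{r!}{|\mu|\,(|\mu|-1)_r} \;=\; O_r(|\mu|^{-r-1}),
$$
which is exactly the desired decay at the level of a single summand.

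It then remains to sum over the partitions $\alpha$, the pairs $(d,d')$, and $\ell$. A crude count gives a factor of $\binom{n-2}{\ell}\cdot k_1 k_2$ for each $\ell\in[\![1,n-3]\!]$, which must be absorbed into the $|\mu|$-decay. I would do this by applying Lemma~\ref{lem:factorial} iteratively, in the same spirit as the proof of Lemma~\ref{lem:est1} in~\cite[Proposition 3.4]{Agg1}: the hypothesis $m<(r+3)/2$ ensures that the residual factor $|\mu|^{2m-4}$ coming from Lemma~\ref{lem:est1} stays strictly below the $|\mu|^{r+1}$ saved in the previous step, leaving a net budget of at least one power of $|\mu|$ to dominate the partition bookkeeping. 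The resulting summation produces the bound $\widehat{A}_m(\mu)/a(\mu)<C/|\mu|^{r+1}$.

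The hard part will be the combinatorial step: the partition count $\binom{n-2}{\ell}$ can grow exponentially in $n$, and the admissibility hypothesis $|\mu|\ge rn$ must be exploited via a careful iteration of Lemma~\ref{lem:factorial} to trade multinomial factors for products of factorials that cancel against $|\mu|!$. This is the stage that most closely mirrors the delicate factorial arithmetic of \cite{Agg1}, and I expect it to proceed along the same lines, with the $1\le\ell\le n-3$ restriction playing the role of the ``non-degeneracy'' hypothesis that keeps the relevant factorial inequality away from its extremal case.
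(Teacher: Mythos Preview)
Your overall strategy is the same as the paper's: reduce to $m=2$ by bounding $A_{m-1}(\mu_2)$ via Lemma~\ref{lem:est1}, convert $a(\mu_1)a(\mu_2)/a(\mu)$ into a ratio of factorials using upper/lower bounds on $v$, and then control the factorial ratio with Lemma~\ref{lem:factorial}. Where your plan diverges from the paper, and where it has a real gap, is the ``combinatorial step''.

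The per-summand bound you record, based only on $s=\min(|\mu_1|,|\mu_2|)\ge r$, gives
\[
\frac{|\mu_1|!\,|\mu_2|!}{|\mu|!}\ \le\ \frac{r!}{|\mu|\,(|\mu|-1)_r}\ =\ O_r\!\big(|\mu|^{-r-1}\big),
\]
but the number of summands is $\sum_{\ell=1}^{n-3}\binom{n-2}{\ell}\,k_1k_2\sim k_1k_2\,2^{\,n-2}$. Since $n$ can be of order $g/r$ on ${\rm Adm}(r)$, this exponential factor is not dominated by any fixed power of $|\mu|$; in particular your claimed ``net budget of at least one power of $|\mu|$'' cannot absorb it. The ``residual factor $|\mu|^{2m-4}$'' you invoke is really $|\mu_2|^{-(2m-4)}$, and as $|\mu_2|$ may be as small as $O(r)$ it contributes only a constant, not an extra saving.

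The fix, which is exactly what the paper does, is to feed the full $\ell$-dependent information into Lemma~\ref{lem:factorial} rather than only $s\ge r$: since every entry of $\mu$ is $\ge r$, one has $|\mu_{\alpha_1}|\ge r\ell$ and $|\mu_{\alpha_2}|\ge r(n-2-\ell)$, and applying Lemma~\ref{lem:factorial} with $C_1=r\ell$, $C_2=r(n-2-\ell)$ bounds each summand (for $\ell\le (n-2)/2$) by $(|\mu|-2-r\ell)!\,(r\ell)!$. The paper then shows that $\binom{n-2}{\ell}(|\mu|-2-r\ell)!(r\ell)!$ is \emph{decreasing} in $\ell$ on $[2,(n-2)/2]$ (this is where the hypothesis $rn\le|\mu|$ is actually used), so the whole sum over $2\le\ell\le n-4$ is controlled by its $\ell=2$ term, yielding the required $|\mu|^{-r-1}$. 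The boundary cases $\ell\in\{1,n-3\}$ are handled separately, with a further case split on whether $k_1=k_2=r$, using crucially that $k_1,k_2$ are the two smallest entries of $\mu$; your sketch does not address these endpoints, and the uniform bound $s\ge r$ is genuinely all you get there, so the extra argument is needed.
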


\begin{proof}
We begin by remarking that $\widehat{A}_m(\mu) \leq A_{m}(\mu)$. Therefore Lemma~\ref{lem:est1} implies that there exists a constant $K'>0$ such that: for all $1\leq m< (r+3)/2 $, we have $A_{m}(\mu)< K' A_1(\mu)$ and 
$$
A^\ell_{m}(\mu)< K'\cdot A^{\ell}_{2}(\mu)= K' \cdot \!\!\!\!\!\! \!\!\!\!\!\! \!\!\!\!\!\!\!\! \sum_{\begin{smallmatrix}
\alpha\in \cP([\![3,n]\!],(\ell,(n-2-\ell)),  \\
1\leq d\leq k_1, 1\leq d'\leq k_2 \end{smallmatrix}} \!\!\!\!\!\! \!\!\!\!\!\!\!\!\!\! a((d+d'-1)+\mu_{\alpha_1}) \cdot a((k_1+k_2-d-d'-1)+\mu_{\alpha_2}).
$$
Thus in order to estimate the $\widehat{A}_m(\mu)$ we only need to estimate $\widehat{A}_2(\mu)$. We use the expression of $a(\mu)$ and the boundedness of the $v(\mu)$ to deduce that there exists a constant $K$ such that 
\begin{equation}\label{for:ineqA2}
(2\pi)^{2g} A_{2}^{\ell}(\mu) \leq K \cdot  \!\!\!\!\!\!\!\!\!\!\!\!\!\!\!\! \sum_{\begin{smallmatrix}
\alpha\in \cP([\![3,n]\!],(\ell,(n-2-\ell)),  \\
1\leq d\leq k_1, 1\leq d'\leq k_2 \end{smallmatrix}}  \!\!\!\!\!\!\!\! \!\!\!\!\!\!\!\!\!\! \left(d+d'-1+|\mu_{\alpha_1}|\right)! \left((k_1+k_2-d-d'-1)+|\mu_{\alpha_2}|\right)!.
\end{equation}
Now for all $\ell$ we will estimate $A^\ell_{2}$ by bounding the contribution of all terms in the sum of the right-hand side. To do so, we will the cases  $\ell=1$ or $n-3$ from the others. We set:
$$\widetilde{A}_2(\mu)=\sum_{2\leq \ell \leq n-4} A_2^\ell(\mu)=\widehat{A}_2(\mu)-A_2^1(\mu)-A_2^{n-3}(\mu).
$$

\noindent \underline{\em Estimating $A_{2}^{\ell}(\mu)$ for $2\leq \ell\leq n-4$.} For all $\alpha,d,d'$ in the range of the sum of~\eqref{for:ineqA2}, we apply Lemma~\ref{lem:factorial} with 
\begin{eqnarray*}
&& A_1=|\mu_{\alpha_1}|+d+d'-1- r\ell, \, A_2=|\mu_{\alpha_2}|+k_1+k_2-(d+d')-1- r(n-2-\ell), \\
&&   C_1=r\ell,  \text{ and }\, C_2=r(n-2-\ell).
\end{eqnarray*}
Then the terms of the sum~\eqref{for:ineqA2} are bounded by 
$$
\left\{ \begin{array}{cl} (|\mu|-2- r\ell)! (r\ell)! & \text{if $\ell\leq (n-2)/2$,}\\ (|\mu|-2- r(n-2-\ell) )! (r(n-2-\ell))! & \text{otherwise} \end{array}\right.
$$

Then summing over all $2\leq \ell\leq n-4$, we get 
\begin{eqnarray*}
(2\pi)^{2g} \widetilde{A}_m(\mu)&\leq & 2K \cdot  k_1k_2\cdot \!\!\!\!\! \sum_{2\leq \ell\leq (n-2)/2} \binom{n-2}{\ell} (|\mu|-2-r\ell)! (r\ell)!\\
&=& 2K \cdot k_1 k_2(n-2)! \cdot \!\!\!\!\! \sum_{2\leq \ell\leq (n-2)/2} \frac{(r\ell)!}{\ell!(n-2-\ell)!} (|\mu|-2-r\ell)! 
\end{eqnarray*}
The term in the sum is a decreasing function of $\ell$: indeed, for $\ell\geq 3$ we have:
\begin{eqnarray*}
&&\left(\frac{(r\ell)!}{\ell!(n-2-\ell)!} (|\mu|-2-r\ell)!)\right) \cdot \left(\frac{(r\ell-r)!}{(\ell-1)!(n-1-\ell)!} (|\mu|-2-r\ell+r)! \right)^{-1}\\
&=& \frac{r\cdot (n-1-\ell)}{(|\mu|-1-r\ell+r)}\cdot \frac{(r\ell-1)\times \ldots \times (r\ell-r+1)}{(|\mu|-2-r\ell+r) \times \ldots \times (|\mu|-r\ell-1)}\leq 1\cdot 1.
\end{eqnarray*}
The two inequalities here come from: $r\cdot n\leq |\mu|$ (as all entries of $\mu$ are greater than $r-1$) and $r\ell-1\leq |\mu|-r\ell-1$.
Thus all the terms of this sum are bounded by
$$
(|\mu|-2-2r )! \frac{(2r)!}{r!}.
$$
Finally we use the fact that $k_1$ and $k_2$ are the smallest entries of $\mu$ to deduce that $k_1k_2(n-2)(n-3)\leq (|\mu|-2r)(|\mu|+1-2r)$ and that $(n-3)\leq |\mu|+2-2r$ to deduce that
$$
(2\pi)^{2g} \widetilde{A}_m(\mu)\leq K \frac{(2r)!}{r!}  (|\mu|+2-2r)!\leq K \frac{(2r)!}{r!}  (|\mu|-1-r)!
$$
as $r\geq 2$. We deduce from this estimates that there exists a constant $C'$ such that  
$$\frac{\widetilde{A}_m(\mu)}{a(\mu)}\leq \frac{C'}{ |\mu|^{r-1}}.$$

\noindent \underline{\em Estimating $A_{2}^{1}(\mu)$ and $A_{2}^{n-3}(\mu)$.} In order to bound both cases we make a new disjonction. 

First we assume that $k_1=k_2=r$. Then the above estimates for $2\leq \ell\leq n-4$ give:
\begin{eqnarray*}
(2\pi)^{2g}\left(A_{m}^{1}(\mu)+A_{2}^{n-3}(\mu)\right)&\leq& 2 K\cdot k_1k_2 \binom{n-2}{1} (|\mu|-2-r)! \\
&\leq & (2K r^2) (|\mu-1-r)! 
\end{eqnarray*}

Otherwise we use the assumption that $k_1$ and $k_2$ are the smallest entries. Thus all other entries are at greater than $r$. Therefore for all non-trivial partitions $\alpha$ of $[\![3,n]\!]$ into two non-trivial sets, then both $|\mu_{\alpha_1}|$ or $|\mu_{\alpha_2}|$ are larger than $r$. Thus each summand in the right-hand side of the inequality~\eqref{for:ineqA2} is smaller than:
$$
(r+1)! (|\mu|-r-3)!
$$
Then we have the inequality
\begin{eqnarray*}
(2\pi)^{2g}\left(A_{m}^{1}(\mu)+A_{2}^{n-3}(\mu)\right)&\leq& 2 K\cdot k_1k_2 \binom{n-2}{1} (r+1)! (|\mu|-r-3)!\\
&\leq& 2K \cdot (|\mu|-r-1)!
\end{eqnarray*}
Thus there exists a constant $C''$ such that for all $\mu$ and $m\geq 2$, we have
$$
A_{m}^{1}(\mu)+A_{2}^{n-3}(\mu)\leq C''\frac{a(\mu)}{|\mu|^{r+1}}.
$$
All together the constant $C=C'+C''$ satisfies the requirement of the lemma. 
\end{proof}

If $E$ is a set, we denote by $\overline{\cP}(E)\subset {\cP}(E)$ the set of partitions with at least two non-empty parts. For all $m\geq 1$, we denote
$$
\overline{A}_m(\mu)=\sum_{\begin{smallmatrix}
\alpha\in \overline{\cP}([\![3,n]\!])_m,  \\
\underline{d}\in\cC_{k_1}(m), \underline{d'}\in\cC_{k_2}(m) \end{smallmatrix}} \!\!\!\!\!\!\!\!\!\! \Bigg( \prod_{i=1}^m  a(\mu_{\alpha_i}+(d_i+d_i'-1))\Bigg).
$$
\begin{lemma}\label{lem:est4}
We fix $r\geq 2$.  There exists a constant $C>0$, such that for all  $\mu\in {\rm Adm}(r)$ and $2\leq m< (r+3)/2 $, we have
$
\overline{A}_m(\mu)<C \frac{ {a(\mu)}}{|\mu|^{r+1}}.$
\end{lemma}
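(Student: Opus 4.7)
The plan is to reduce Lemma~\ref{lem:est4} to Lemma~\ref{lem:est3} by a symmetrization argument establishing the pointwise bound $\overline{A}_m(\mu)\le \tfrac{m}{2}\,\widehat{A}_m(\mu)$. The intuition is that $\overline{A}_m$ and $\widehat{A}_m$ are supported on essentially the same set of ``genuinely split'' ordered partitions of $[\![3,n]\!]$: $\widehat{A}_m$ singles out position $1$, while $\overline{A}_m$ is fully position-symmetric, so averaging over all $m$ positions converts one into the other.

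First I would unfold the recursive definitions to rewrite $\widehat{A}_m(\mu)=\sum_{\ell=1}^{n-3}A_m^\ell(\mu)$ as a single flat sum over triples $(\alpha,\underline{d},\underline{d'})\in\cP([\![3,n]\!])_m\times\cC_{k_1}(m)\times\cC_{k_2}(m)$ subject to the constraint $1\le|\alpha_1|\le n-3$, weighted by $w(\alpha,\underline{d},\underline{d'})=\prod_{i=1}^m a(\mu_{\alpha_i}+(d_i+d_i'-1))$. This is a direct re-indexing of the nested $A_{m-1}$ inside $A_m^\ell$. Since $w$ together with the ranges of $\underline{d}$ and $\underline{d'}$ is invariant under any permutation of $[\![1,m]\!]$, the analogous partial sum $\widehat{A}_m^{(j)}(\mu)$ obtained by imposing $1\le|\alpha_j|\le n-3$ on a prescribed position $j$ equals $\widehat{A}_m(\mu)$ for every $j$.

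Summing the identity $\widehat{A}_m^{(j)}(\mu)=\widehat{A}_m(\mu)$ over $j=1,\ldots,m$ yields
\[
m\cdot\widehat{A}_m(\mu)=\sum_{\alpha\in\overline{\cP}([\![3,n]\!])_m}\#\{j:\alpha_j\neq\emptyset\}\cdot\!\!\!\sum_{\underline{d},\underline{d'}}\prod_{i=1}^m a(\mu_{\alpha_i}+(d_i+d_i'-1)),
\]
because a position $j$ satisfies $1\le|\alpha_j|\le n-3$ precisely when $\alpha_j$ is non-empty and some other part is also non-empty---a condition that forces $\alpha\in\overline{\cP}([\![3,n]\!])_m$ and which fails everywhere for $\alpha\notin\overline{\cP}([\![3,n]\!])_m$ (since either all parts are empty or a single part exhausts $[\![3,n]\!]$). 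As every $\alpha\in\overline{\cP}([\![3,n]\!])_m$ has at least $2$ non-empty parts, $\#\{j:\alpha_j\ne\emptyset\}\ge 2$, and we conclude $m\,\widehat{A}_m(\mu)\ge 2\,\overline{A}_m(\mu)$.

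Finally, the hypothesis $2\le m<(r+3)/2$ bounds $m/2$ by a constant depending only on $r$. Inserting the estimate $\widehat{A}_m(\mu)\le C\,a(\mu)/|\mu|^{r+1}$ from Lemma~\ref{lem:est3} then yields the required inequality with $C'=(r+3)C/4$. The argument is purely combinatorial and I foresee no real obstacle; the only step requiring care is the flattening of the nested $A_{m-1}$ inside the definition of $A_m^\ell$ into an honest sum over ordered $m$-part partitions, after which the symmetrization trick is immediate.
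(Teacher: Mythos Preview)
Your argument is correct and is genuinely different from the paper's proof. The paper proceeds by induction on $r$: it decomposes $\overline{A}_m(\mu)$ according to whether $\alpha_1$ is empty or not, obtaining
\[
\overline{A}_m(\mu)=\widehat{A}_m(\mu)+\sum_{d,d'} a(d+d'-1)\,\overline{A}_{m-1}\bigl((k_1-d,k_2-d')+\mu_{[\![3,n]\!]}\bigr),
\]
then splits the second sum into the range $d+d'>r$ (handled by Lemma~\ref{lem:est2}) and $d+d'\le r$ (where the new first two entries are at least $r-D$ and the induction hypothesis for a smaller $r'$ applies to $\overline{A}_{m-1}$). Your route is shorter: once the nested $A_{m-1}$ inside $A_m^\ell$ is flattened, the symmetrization over the $m$ positions gives the clean pointwise inequality $\overline{A}_m(\mu)\le \tfrac{m}{2}\,\widehat{A}_m(\mu)$, and Lemma~\ref{lem:est3} finishes the job with no induction at all. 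The only place to be careful is the non-negativity of the weights $a(\cdot)$, which holds since $a(\mu)=\tfrac{|\mu|!}{2(2\pi)^{2g}}v(\mu)\ge 0$; you use this implicitly when passing from the identity $m\,\widehat{A}_m=\sum_\alpha \#\{j:\alpha_j\neq\emptyset\}\cdot(\cdots)$ to the inequality $\ge 2\,\overline{A}_m$. What your approach buys is a direct reduction to Lemma~\ref{lem:est3} without tracking how the admissibility threshold degrades along the recursion; what the paper's approach buys is a template that could in principle be adapted if the weights were not sign-definite.
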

\begin{proof} We prove this Lemma by induction on $r\geq 1$. The case $r=1$ is clear as $\overline{A}_m(\mu)=0$ for $m=1$. Now, let $r\geq 2$. By the induction assumption, there exists a constant $K$ such that for all $r'<r$, all $\mu\in {\rm Adm}(r')$ and $m<(r+3)/2$, we have $A_m(\mu)< K a(\mu)/|\mu|^{r+1}$ (such a $K$ exists because there are finitely many $0<r'<r$ to which we apply the induction hypothesis). We begin by writing
\begin{eqnarray*}
\overline{A}_m(\mu)&=&\widehat{A}_m(\mu)+\sum_{\begin{smallmatrix} d\leq k_1,d'\leq k_2 \end{smallmatrix}} a(d+d'-1)\cdot \overline{A}_{m-1}\left((k_1-d,k_2-d')+\mu_{[\![n-3]\!])}\right).\\
&=& \widehat{A}_m(\mu)+\sum_{\begin{smallmatrix} d\leq k_1,d'\leq k_2\\ d+d'> r \end{smallmatrix}} a(d+d'-1)\cdot \overline{A}_{m-1}\left((k_1-d,k_2-d')+\mu_{[\![n-3]\!])}\right).\\
&+&\sum_{\begin{smallmatrix} d\leq k_1,d'\leq k_2\\ d+d'\leq r \end{smallmatrix}} a(d+d'-1)\cdot \overline{A}_{m-1}\left((k_1-d,k_2-d')+\mu_{[\![n-3]\!])}\right).
\end{eqnarray*}
Lemmas~\ref{lem:est2} and~\ref{lem:est3} imply that there exists a constant $C'$ such that the first two terms are smaller than $C' a(\mu)/g^{r+1}$ for $\mu$ and $m$ thus we will only consider the third one that we denote by $\overline{A}_m'(\mu)$. We denote by $M={\rm max}_{r'\leq r-2} a(r')$. Then we have
$$
\overline{A}_m'(\mu)\leq M\cdot\!\!\!\!\!\! \sum_{\begin{smallmatrix} d\leq k_1,d'\leq k_2\\ d+d'\leq r \end{smallmatrix}} \!\!\!\!\!\!  \overline{A}_{m-1}\left((k_1-d,k_2-d')+\mu_{[\![n-3]\!])}\right).
$$
In this sum, if $d+d'=D$, then $k_1-d$ and $k_2-d'$ are greater or equal to $r-D$ and $(m-1)< (r-3)/2$. Thus we can write: 
$$
\overline{A}_{m-1}\left((k_1-d,k_2-d')+\mu_{[\![n-3]\!])}\right) \leq K a\left((k_1-d,k_2-d')+\mu_{[\![n-3]\!])}\right)
$$
\end{proof}

\begin{proof}[End of the proof of Proposition~\ref{prop:wkappa}]
We fix $r>0$. We consider only vectors $\mu\in {\rm Adm}(r)$ of length greater than $2$. Using Lemma~\ref{lem:est4}, there exists a constant $C$ such that 
$$\frac{A_m(\mu)-m A_{m}^{n-2}(\mu)}{a(\mu)}< C g^{-r-1}.$$
Indeed
 $\frac{\overline{A}_{m-1}(\mu)}{a(\mu)}=O(g^{-r-1})$ and 
 ${A}_{m-1}(\mu)- \overline{A}_{m-1}(\mu)=mA_{m}^{n-2}(\mu){a(\mu)}$
  (as summing over all partitions of $[\![3,n]\!]$ of length $m$ with exactly one non-trivial part is the same as choosing the index in $[\![1,m]\!]$ that has the only non-trivial part). Now use Lemma~\ref{lem:est2} to write
$$
A^{n-2}_m(\mu)=\sum_{D\leq r} A^{n-2,D}_m(\mu) + O_r(g^{-r-1})
$$
Thus there exists a constant $C$ such that 
$$
\left| a(\mu)- |\mu| \sum_{m\geq 1}\sum_{D\leq r} A_{m}^{n-2,D}(\mu)\right|< \frac{C}{g^{r+1}}
$$
Now we use the fact that $A_{m}^{n-2,D}(\mu)=a(\mu^{(D/2)}) \cdot \sum_{d+d'=D} A_{m-1}(d,d')$, to obtain the inequality:
\begin{eqnarray*}
\bigg| (2\pi)^{2g}&\!\!\!\!\! \!\!\!\!\! &(a(\mu)-a(\mu^{(0)}) \\
&\!\!\!\!\! \!\!\!\!\! &- |\mu| \!\!\! \sum_{2\leq D\leq r} (2\pi)^{2g-D}a(\mu^{(D/2)}) \sum_{m>1} \sum_{d+d'=D} (2\pi)^{2D} A_{m-1}(d,d')\bigg|< \frac{C (2\pi)^{2g}}{g^{r+1}}
\end{eqnarray*}
Now we divide this inequality by $|\mu|!/2$ to obtain:
$$
\left|v(\mu)-v(\mu^{(0)})- \sum_{i\geq 2} \wk_i'v(\mu^{(i)})\right|< \frac{C}{g^{r+1}}.
$$
Together with Corollary~\ref{cor:two} for the case $n=2$, we get Proposition~\ref{prop:wkappa}. 
\end{proof}

\section{Computing the asymptotic expansion by induction}

In this section use the estimates~\eqref{for:estkappa} and~\eqref{for:estwkappa} to complete the proof of Theorem~\ref{th:main}. To do so, we recall  some basic tools of linear algebra for linear combinations of Pochammer.

\subsection{Linear combinations of Pochammer symbols} Here we consider the algebra rational functions of two variables $g$ and $n$.  We define the vector space $H\subset \QQ(g,n)$ as the sub-vector space spanned by elements of the form
$$
Q_{m,\ell}(g,n)= \frac{1}{g^m(2g-3+n)_\ell},
$$
for $(m,\ell)$ in $\NN^{2}$. Obviously the $Q_{m,\ell}$ are linearly independant in $\QQ(g,n)$. The vector space $H$ is naturally bi-graded by $(m,\ell)\in \NN^2$: we denote  by $H^{m,\ell}\subset H$ the vector space spanned by $Q_{m,\ell}$. Finally we denote respectively by $H^{r}, H^{\leq r},$ and $H^{\geq r}$ the spaces $H^{r}=\bigoplus_{m+\ell=r} H^{m,\ell}$ (or $\leq r$, $\geq r$). 
\bigskip

For $Q\in H$, we define the rational function $\Delta(Q)(g,n)=Q(g,n)-Q(g,n-1)$. For all $(m,\ell)\in \NN^{2}$, we have $\Delta(Q_{m,\ell})=-\ell Q_{m,\ell+1}$. Therefore, the function $\Delta$ is a linear endomorphism of $H$ of bi-degree $(0,1)$ and its kernel is $\bigoplus_{m} H^{m,0}$. We define the inverse of $\Delta$ as
\begin{eqnarray*}
\Delta^{-1}: \bigoplus_{m,\ell\geq 2} H^{m,\ell} &\to& \bigoplus_{m,\ell\geq 1} H^{m,\ell}\\
Q_{m,\ell} &\mapsto &  -\frac{Q_{m,\ell-1}}{\ell-1}.
\end{eqnarray*} 

%

\begin{lemma}\label{lem:unipoch}
Let $r\geq 0$ and $Q\in \RR\otimes H^{r}$.  If there exist constants $g_0,C,\lambda>0$ such that $Q(g,n)< \frac{C}{g^{r+1}}$ for all integers $g\geq g_0$, and $1\leq n\leq \lambda g$, then $Q=0$.
\end{lemma}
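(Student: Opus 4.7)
The plan is to extract each coefficient of $Q$ in the basis $\{Q_{m,\ell}\}_{m+\ell=r}$ from the asymptotics of $Q$ along radial sequences $(g_k,n_k)$ with $n_k/g_k$ tending to a prescribed constant. Write
\[
Q = \sum_{m+\ell=r} a_{m,\ell}\, Q_{m,\ell}, \qquad a_{m,\ell}\in\RR,
\]
and observe that the only task is to show every $a_{m,\ell}$ vanishes.

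First I would analyze the basis functions. For any fixed $c\in(0,\lambda]$ pick integers $g_k\to\infty$ and $n_k$ with $1\le n_k\le \lambda g_k$ and $n_k/g_k\to c$. Each factor of $(2g_k-3+n_k)_\ell$ equals $(2+c)g_k + o(g_k)$, so
\[
(2g_k-3+n_k)_\ell = (2+c)^\ell g_k^\ell\,(1+o(1)),
\]
and therefore
\[
g_k^r\, Q_{m,\ell}(g_k,n_k) \;\longrightarrow\; \frac{1}{(2+c)^\ell}.
\]
Summing,
\[
g_k^r\, Q(g_k,n_k) \;\longrightarrow\; P(c) \;:=\; \sum_{\ell=0}^{r} \frac{a_{r-\ell,\ell}}{(2+c)^\ell}.
\]

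Next I would invoke the hypothesis. Reading the $O_r$-estimate two-sidedly (as is clearly intended in its applications in the paper), we have $|g_k^r Q(g_k,n_k)| < C/g_k$, which tends to zero. Hence $P(c)=0$ for every $c\in(0,\lambda]$.

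Finally I would conclude by a polynomial rigidity argument. Set $y = 1/(2+c)$; then $P$ becomes a polynomial of degree at most $r$ in $y$, with coefficients $a_{r-\ell,\ell}$, vanishing on the entire interval $y\in[1/(2+\lambda),\,1/2)$. Since a nonzero polynomial has only finitely many zeros, every $a_{r-\ell,\ell}$ must vanish, so $Q=0$. The argument is essentially a Vandermonde-type extraction of coefficients, and I do not expect any real obstacle; the only mild subtlety is the two-sided reading of the $O_r$ notation, without which one would only obtain $P\le 0$ on an interval, insufficient to force $P\equiv 0$.
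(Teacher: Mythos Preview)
Your proof is correct and follows essentially the same route as the paper: both evaluate $Q$ along radial directions $n\approx cg$ (the paper writes $n=sg$), use the asymptotic $Q_{m,\ell}(g,cg)\sim g^{-r}(2+c)^{-\ell}$ to reduce to a polynomial identity in $1/(2+c)$ vanishing on an interval, and conclude all coefficients are zero. The only cosmetic differences are that the paper phrases the last step as a contradiction (choosing the smallest $m$ with nonzero coefficient and normalizing by that term) and wraps things in an induction on $r$ to cover $H^{\le r}$, whereas you multiply directly by $g^r$ and apply polynomial rigidity; your explicit flag about the two-sided reading of the bound is also implicit in the paper's argument.
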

\begin{proof} 
We prove the lemma by induction on $r$. The base case $r=0$ is obvious, thus we assume that $r>0$ and that the lemma holds for all $r'\leq r-1$.

Let $Q=\sum_{m+\ell\leq r}  c_{m,\ell} Q_{m,\ell}$ be a rational function satisfying the hypothesis of the Lemma. In particular the function $\widetilde{Q}=\sum_{m+\ell\leq r-1}  c_{m,\ell} Q_{m,\ell}$ satisfies the hypothesis of the lemma for $r'=r-1$. Thus the coefficients $c_{m,\ell}$ vanish for $m+\ell<r$. Therefore we can assume that $Q\in \RR\otimes H^r$.

We write $Q=\sum_{m+\ell\leq r}  c_{m,\ell} Q_{m,\ell}$. We assume that $Q\neq 0$. Let $m$ be the smallest integer $0\leq m\leq r$ such that $c_{m,r-m}\neq 0$. We consider the rational function $F(g,n)=Q/(c_{m,r-m} Q_{m,r-m})$. Let $s$ be a positive rational number smaller than $s<\lambda$. There exists an infinite number of $g$ such that $sg$ is an integer. Besides for $m'\geq m$, we have
$$\underset{g\mapsto \infty}{\lim} Q_{m',r-m'}(g,sg)/Q_{m,r-m}(g,sg) ={(2+s)^{m-m'}}.$$ Therefore the limit of $F(g,sg)$ as $s$ goes to infinity is a non-zero polynomial in $(2+s)$. However, $Q_{m,r-m}(g,sg)<C g^{-r}$ for all $g$ and $s<\lambda$ thus  the limit of $F(g,sg)$ is null. 
\end{proof}

\subsection{Computation of the coefficients of the asymptotic expansion~\eqref{for:main}}

In this section we will prove the following proposition.
\begin{proposition}\label{prop:main} For all $r\geq 1$, there exists a rational function $Q_r\in \QQ[\pi^{2}]\otimes H^{\leq r}$ such that $v(\mu)=Q_r(g,n)+O_r(g^{-r})$.
\end{proposition}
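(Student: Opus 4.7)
The plan is to prove Proposition~\ref{prop:main} by induction on $r$, combining Proposition~\ref{prop:wkappa} (which propagates the asymptotic across the length $n$) with Proposition~\ref{prop:kappa} (which anchors it at $n=1$). The base case $r=1$ follows directly: Proposition~\ref{prop:kappa} gives $v(2g-1) = 4 + \kappa_1/g + O(g^{-2})$, while Proposition~\ref{prop:wkappa} at level $r=1$ reduces $v(\mu)$ to $v(\mu^{(0)})$ up to $O_1(g^{-2})$; iterating the latter $n-1 = O(g)$ times back to $n=1$ produces a cumulative error of $O(g^{-1})$, so any $Q_1 \in H^{\leq 1}$ whose restriction to $n=1$ matches $4 + \kappa_1/g$ works.

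For the inductive step, suppose $Q_{r'} \in \QQ[\pi^2] \otimes H^{\leq r'}$ with $v(\mu) = Q_{r'}(g,n) + O_{r'}(g^{-r'})$ has been constructed for every $r' \leq r-1$. For $\mu \in {\rm Adm}(r)$ of length $n \geq 2$, Proposition~\ref{prop:wkappa} combined with the inductive hypothesis at level $r' = r-2i+1$ applied to $\mu^{(i)}$ (of genus $g-i$, length $n-1$, lying in ${\rm Adm}(r-2i+1)$ provided $r\geq 2$) yields
\begin{equation*}
v(\mu) - v(\mu^{(0)}) \;=\; \tilde{R}(g,n) \;+\; O_r(g^{-r-1}), \qquad \tilde{R}(g,n) := \sum_{i=1}^{\lfloor r/2 \rfloor} \wk_i \, \frac{Q_{r-2i+1}(g-i,n-1)}{(2g-3+n)_{2i}}.
\end{equation*}
A direct computation---expanding $(g-i)^{-m}$ as a polynomial in $1/g$, using the identity $\frac{1}{(2g-3+n)_{2i}(2g-4+n-2i)_\ell} = \frac{2g-3+n-2i}{(2g-3+n)_{2i+\ell+1}}$ to reabsorb the missing linear factor in the denominator Pochhammers, and rewriting each $1/M^k$ (with $M = 2g-3+n$) in the basis $\{Q_{0,\ell}\}_{\ell \geq k}$ modulo arbitrarily small order---re-expresses $\tilde{R}$, modulo $O(g^{-r-1})$, as an element of $\QQ[\pi^2] \otimes \bigoplus_{\ell \geq 2,\, m+\ell \leq r} H^{m,\ell}$.

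Apply $\Delta^{-1}$ from Section~3.1 to set $R_+ := \Delta^{-1}(\tilde{R}) \in \QQ[\pi^2] \otimes \bigoplus_{\ell \geq 1,\, m+\ell \leq r} H^{m,\ell}$, so that $\Delta R_+ = \tilde{R}$ exactly. The remaining component $R_0 = \sum_{m \leq r} c_m\, g^{-m}$ is fixed by the requirement $R_0(g) + R_+(g,1) \equiv \sum_{m \leq r} \kappa_m\, g^{-m} \pmod{O(g^{-r-1})}$, which is solvable because $R_+(g,1)$ admits a finite Laurent expansion in $1/g$. Set $Q_r := R_0 + R_+$. The bound $v(\mu) = Q_r(g,n) + O_r(g^{-r})$ is then verified by a sub-induction on $n$: at $n=1$ the error is $O(g^{-r-1})$ by the choice of $R_0$, and the step from $n-1$ to $n$ contributes an additional $O(g^{-r-1})$ (since $\Delta Q_r = \tilde{R}$ matches the right-hand side of Proposition~\ref{prop:wkappa} up to this order), accumulating to $O(n\, g^{-r-1}) = O(g^{-r})$ on ${\rm Adm}(r)$ because $n = O(g)$. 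The main obstacle is the middle expansion step: one must simultaneously control the $1/g$-expansion arising from the shift $g \mapsto g-i$ and the $1/(2g-3+n)_\ell$-expansion arising from the denominator Pochhammers, and verify that everything regroups inside $H^{\leq r}$ without spilling beyond total degree $r$. Uniqueness of the resulting coefficients $c_{k,\ell}$ then follows from Lemma~\ref{lem:unipoch}.
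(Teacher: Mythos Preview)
Your proposal is correct and follows essentially the same route as the paper: induct on $r$, use Proposition~\ref{prop:wkappa} to express $v(\mu)-v(\mu^{(0)})$ via the induction hypothesis, invert $\Delta$ to produce the $n$-dependent part of $Q_r$, fix the $\ell=0$ part using Proposition~\ref{prop:kappa}, and verify the bound by telescoping down to $n=1$ and absorbing the accumulated $O(n\,g^{-r-1})=O_r(g^{-r})$ error.

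There are only two small differences worth noting. First, the paper replaces each $v(\mu^{(i)})$ by $Q_{r-1}(g-i,n-1)$, whereas you use $Q_{r-2i+1}$; both work, since the extra factor $(2g-3+n)_{2i}$ in the denominator upgrades the error either way. Second, you truncate $\tilde R$ at total degree $r$ while the paper truncates at $r+1$ before applying $\Delta^{-1}$; this means your $R_+$ sits in $H^{\le r-1}$ rather than $H^{\le r}$, but since Proposition~\ref{prop:main} only asserts existence of some $Q_r\in H^{\le r}$, this is harmless. In fact your treatment of the ``middle expansion step'' is slightly more honest than the paper's: the claim there that $\frac{Q(g-i,n-1)}{(2g-3+n)_{2i}}$ lies \emph{exactly} in $H$ is not literally true for the $1/(g-i)^m$ factor, and your formulation ``modulo $O(g^{-r-1})$'' is the correct one. (For the Pochhammer factor your identity gives $\frac{1}{(M)_{2i}(M-2i-1)_\ell}=Q_{0,2i+\ell}+\ell\,Q_{0,2i+\ell+1}$ on the nose, so only the $g$-shift requires truncation.)
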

This proposition implies Theorem~\ref{th:main}. Indeed, Lemma~\ref{lem:unipoch} gives the unicity of the function $Q_r$ and the compatibility of $Q_r$ and $Q_{r-1}$ under the restriction of the degree.

\begin{proof} We will work by induction on $r\geq 1$. The base of the induction is$r=1$ and is already proved in~\cite{Agg}. We assume now that $r\geq 2$ and that there exists a  (uniquely determined)  rational function in $Q_{r-1}\in \QQ[\pi^{2}]\otimes H^{\leq r-1}$ such that $v(\mu)=Q_{r-1}(g,n)+ O_{r-1}(g^{-r+1})$. 
We will first construct an element $Q_r\in \QQ[\pi^{2}]\otimes H^{\leq r}$ and then check that $v(\mu)=Q_r(g,n)+ O_{r}(g^{-r})$.

 Let us assume that $n\geq 2$. For all $i\geq 1$, we have $v(\mu^{(i)})=Q(g-i,n-1)+O_r(g^{-r-1})$.  Then we use the equality~\eqref{for:estwkappa} to write 
\begin{eqnarray}\label{for:sum}
v(\mu)-v(\mu^{(0)})= \sum_{i\geq 1} \left( \wk_i \frac{Q(g-i, n-1)}{(2g-3+n)_{2i}}\right) +O_r(g^{-r-1})
\end{eqnarray}
It is easy to check that for all $Q\in H$, the rational function $\frac{Q(g-i, n-1)}{(2g-3+n)_{2i}}$ is in $\bigoplus_{m,\ell\geq 2i} \QQ[\pi^{2}]\otimes H^{m,\ell}$. Therefore the right-hand side of the sum~\eqref{for:sum} is a rational function $\widetilde{D}_r$ in $\bigoplus_{m,\ell\geq 2} \QQ[\pi^{2}]\otimes H^{m,\ell}$. Moreover we define $D_r$ as the component of $\widetilde{D}_r$ in $H^{\leq r+1}$.  Now we define $\widetilde{Q}_r$ as $\Delta^{-1}(D_r)$.  

Proposition~\ref{prop:kappa} implies that there exists a polynomial $F_r\in \QQ_r[\pi^{2}][\frac{1}{g}]$ such that 
$$\widetilde{Q}_r(g,1)- v(2g-1)=F_r(g)+O(g^{-r}).
$$
We define $Q_r(g,n)=\widetilde{Q}_r(g,n)+F_r(g)\in H^{\leq r}$.

To check that $v(\mu)=Q_r(g,n)+O_r(g^{-r})$ we begin by checking that $v(2g-1)=Q_r(g,1)+O(g^{-r-1})$, which is obvious by construction of $Q_r$. Then for all $\mu\in {\rm Adm}(r)$ and all $1\leq i\leq n$, we define $\mu_i$ by induction as $\mu_1=\mu$ and $\mu_{i+1}=\mu_i^{(0)}$. Then we have:
$$
v(\mu)= v(2g-1)+ \sum_{i=2}^{n} v(\mu_{i-1})-v(\mu_{i}).
$$
Each summand in the right-hand side is equal to $D_r(g,n-i)$ up to a $O_r(g^{-r-1})$ and there are at most $2g-2$ summands. Thus
$$
v(\mu) = Q_r(2g-1,1) + \left( \sum_{i=2}^{n} D_r(g,n-i)\right) +O_r(g^{-r})=Q_r(2g-1,n)+O_r(g^{-r}).
$$
\end{proof}

\end{document}